\numberwithin{equation}{section}
\theoremstyle{plain}
\newtheorem{theorem}{Theorem}[section]
\newtheorem{lemma}[theorem]{Lemma}
\newtheorem{corollary}[theorem]{Corollary}
 \theoremstyle{definition}
\newtheorem{definition}[theorem]{Definition}
\newtheorem{remark}[theorem]{Remark}
\DeclarePairedDelimiterX{\inp}[2]{\langle}{\rangle}{#1, #2}
\newcommand{\cE}{{\mathcal E}}
\newcommand{\ba}{\begin{eqnarray}}
\newcommand{\na}{\end{eqnarray}}
\newcommand{\ban}{\begin{eqnarray*}}
\newcommand{\nan}{\end{eqnarray*}}
\newcommand{\N}{{\mathbb N}}
\renewcommand{\thefootnote}{\fnsymbol{footnote}}
\g@addto@macro{\endabstract}{\@setabstract}
\newcommand{\authorfootnotes}{\renewcommand\thefootnote{\@fnsymbol\c@footnote}}%
\title[]{Line graphs and Nordhaus-Gaddum-type \\ bounds for self-loop graphs}
\date{\today}
\begin{document}

\begin{center}
    \vspace{-1cm}
	\maketitle
	
	\normalsize
    \authorfootnotes
    Saieed Akbari\textsuperscript{1},
    Irena M. Jovanovi\'c\textsuperscript{2},
    Johnny Lim\footnote[1]{Corresponding author.}\textsuperscript{3}
	\par \bigskip

        \textsuperscript{1}
        \small{Department of Mathematical Sciences, Sharif University of Technology, Tehran, Iran}

        \textsuperscript{2}
        \small{School of Computing, Union University, Belgrade, Serbia}

	\textsuperscript{3}
        \small{School of Mathematical Sciences, Universiti Sains Malaysia, Penang, Malaysia}\par \bigskip
	
\end{center}

\address{Department of Mathematical Sciences, Sharif University of Technology, Tehran, Iran
}
\email{s\_akbari@sharif.edu}

\address{School of Computing, Union University, Belgrade, Serbia
}
\email{irenaire@gmail.com}

\address{School of Mathematical Sciences, Universiti Sains Malaysia, Malaysia
}
\email{johnny.lim@usm.my}

\vspace{-0.5cm}

\begin{abstract}
Let $G_S$ be the graph obtained by attaching a self-loop at every vertex in $S \subseteq V(G)$ of a simple graph $G$ of order $n.$ In this paper, we explore several new results related to the line graph $L(G_S)$ of $G_S.$ Particularly, we show that every eigenvalue of $L(G_S)$ must be at least $-2,$ and relate the characteristic polynomial of the line graph $L(G)$ of $G$ with the characteristic polynomial of the line graph $L(\widehat{G})$ of a self-loop graph $\widehat{G}$, which is obtained by attaching a self-loop at each vertex of $G$. Then, we provide some new bounds for the eigenvalues and energy of $G_S.$ As one of the consequences, we obtain that the energy of a connected regular complete multipartite graph is not greater than the energy of the corresponding self-loop graph. 
Lastly, we establish a lower bound of the spectral radius in terms of the first Zagreb index $M_1(G)$ and the minimum degree $\delta(G),$ as well as proving two Nordhaus-Gaddum-type bounds for the spectral radius and the energy of $G_S,$ respectively. 

\vspace{1em}
\noindent \textbf{Key words:} Adjacency spectrum, Energy, Self-loop graph, Line graph, Nordhaus-Gaddum-type bounds.

\smallskip
\noindent \textbf{2020 Mathematics Subject Classification:} 05C50, 05C90, 05C92.

\end{abstract}


\section{Introduction}
\label{intro}

Let $G=\left(V(G), E(G)\right)$ be a finite simple graph, where $V(G)$ is the set of vertices of $G$, and $E(G)$ is the set of its edges. If $|V(G)|=n$ and $|E(G)|=m$, we say that $G$ is a graph of order $n$ and size $m$. Let $V(G)=\{v_1, v_2,\ldots, v_n\}$ and $E(G)=\{e_1, e_2,\ldots, e_m\}$. The \emph{incidence matrix} of $G$ is the matrix $B(G)=(b_{ij})_{n \times m}$ whose rows and columns are indexed by the vertices and edges of $G$, respectively. The $(i,j)$-th entry $b_{ij}$ of $B(G)$ is equal to 1, if $v_i$ is incident with $e_j$, and $b_{ij}=0$, if $v_i$ and $e_j$ are not incident. The degree of the vertex $v_i$ will be denoted by $d_G(v_i)$, for $1\leq i\leq n$, while $\Delta(G)=\max\limits_{1\leq i\leq n} \{d_G(v_i)\}$ and $\delta(G)=\min\limits_{1\leq i\leq n} \{d_G(v_i)\}$ will be the maximum and the minimum degree of $G$, respectively. 
When there is no confusion, we write $\Delta$ and $\delta.$
In addition, if $d_G(v_i)=r$, for each $1\leq i\leq n$, $G$ is called an $r$-\emph{regular graph}. If either $d_G(v_i)=r$ or $d_G(v_i)=k$, for each $1\leq i\leq n$, then $G$ is an $(r,k)$-\emph{bidegreed graph}. If $G$ is a graph that is bipartite and bidegreed, then $G$ is an $(r,k)$-\emph{semiregular graph}.

Let $A(G)=(a_{ij})_{n \times n}$ be the \emph{adjacency matrix} of $G$, whose $(i,j)$-th entry $a_{ij}$ is defined by
$a_{ij}=1$, if $v_i$ and $v_j$ are adjacent vertices, and $a_{ij}=0$, otherwise. 
The \emph{characteristic polynomial} $P_G(x)=\det\left(xI_n-A(G)\right)$ of $G$ is the characteristic polynomial of its adjacency matrix, where $I_n$ is the $n \times n$ identity matrix. The \emph{(adjacency) eigenvalues} $\lambda_1(G)\geq\lambda_2(G)\geq\cdots\geq\lambda_n(G)$ of $G$ are the eigenvalues of $A(G)$. 
Since $A(G)$ is real and symmetric, the eigenvalues of $G$ are all real. If $\lambda_1>\lambda_2>\cdots>\lambda_t$ are the distinct eigenvalues of $G$, then the \emph{(adjacency) spectrum} of $G$ will be denoted by ${\rm Spec}(G)=\left( \begin{array}{cccc}
 \lambda_1 & \lambda_2 & \cdots & \lambda_t \\
a_1 & a_2 & \cdots & a_t \\
\end{array}
 \right),
$ where $a_i$, for $1\leq i\leq t$, is the algebraic multiplicity of the eigenvalue $\lambda_i$. In particular, $\lambda_1(G)$, as the largest eigenvalue of $G$, is called the \emph{spectral radius} (or \emph{index}) of $G$, and when there is no confusion of what $G$ is, we shall only write 
$\lambda_1$. It is well-known (see, for example, \cite{cvetkovic2010intro}) that $\sum\limits_{i=1}^{n} \lambda_i(G)=0$ and $\sum\limits_{i=1}^{n} \lambda_i^2(G)=2m$. Another frequently used results related to the eigenvalues of a graph are the \textit{Courant-Weyl inequalities} and the \textit{Interlacing Theorem}:

\begin{theorem} \cite[Theorem 1.3.15]{cvetkovic2010intro}\label{Weylthm}
Let $A$ and $B$ be $n\times n$ Hermitian matrices. Then,
\begin{equation}\label{eq1}\lambda_i(A+B)\leq \lambda_j(A)+\lambda_{i-j+1}(B), \, 1\leq j\leq i\leq n, \end{equation}
\begin{equation}\label{eq2}\lambda_i(A+B)\geq \lambda_j(A)+\lambda_{i-j+n}(B), \, 1\leq i\leq j\leq n. \end{equation}
\end{theorem}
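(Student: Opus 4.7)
The plan is to derive both inequalities from the Courant-Fischer min-max characterization
\[
\lambda_i(M) \;=\; \min_{\substack{V\leq \C^n \\ \dim V = n-i+1}}\; \max_{\substack{x\in V \\ \|x\|=1}} x^{*}Mx,
\]
which holds for any $n\times n$ Hermitian matrix $M$ under the convention $\lambda_1(M)\geq\lambda_2(M)\geq\cdots\geq\lambda_n(M)$, together with an elementary dimension count in $\C^n$.

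For inequality \eqref{eq1}, I would fix $1\leq j\leq i\leq n$ and let $U$ be the $(n-j+1)$-dimensional subspace spanned by eigenvectors of $A$ associated with $\lambda_j(A),\lambda_{j+1}(A),\dots,\lambda_n(A)$, so that $x^{*}Ax\leq \lambda_j(A)\|x\|^2$ for every $x\in U$. Analogously, let $W$ be the $(n-i+j)$-dimensional subspace spanned by eigenvectors of $B$ associated with $\lambda_{i-j+1}(B),\dots,\lambda_n(B)$, so that $x^{*}Bx\leq \lambda_{i-j+1}(B)\|x\|^2$ for every $x\in W$. The dimension count $(n-j+1)+(n-i+j)-n = n-i+1$ shows that $U\cap W$ contains a subspace $V_0$ of dimension $n-i+1$. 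On $V_0$ linearity of the Rayleigh quotient gives $x^{*}(A+B)x\leq \lambda_j(A)+\lambda_{i-j+1}(B)$ for every unit vector, and applying the outer minimum of Courant-Fischer to $A+B$ with $V_0$ as a candidate yields \eqref{eq1}.

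For \eqref{eq2}, instead of rerunning the argument with the dual $\max_{\dim V = i}\min$ form, I would apply \eqref{eq1} to the pair $(-A,-B)$ to obtain $\lambda_i(-A-B)\leq \lambda_j(-A)+\lambda_{i-j+1}(-B)$. Combined with the identity $\lambda_k(-M)=-\lambda_{n-k+1}(M)$ for Hermitian $M$, and the reindexing $i'=n-i+1$, $j'=n-j+1$, this rearranges to $\lambda_{i'}(A+B)\geq \lambda_{j'}(A)+\lambda_{i'-j'+n}(B)$ on the complementary range $1\leq i'\leq j'\leq n$, which is precisely \eqref{eq2}.

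The only step requiring genuine thought is choosing the dimensions of $U$ and $W$ so that the intersection lies in the admissible family for $\lambda_i(A+B)$; once the indices $n-j+1$ and $n-i+j$ are pinned down, everything else is routine Rayleigh-quotient algebra and a sign flip. Since this is essentially Weyl's classical argument rephrased for the decreasing convention on eigenvalues, I do not expect any substantive obstacle.
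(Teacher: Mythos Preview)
Your argument is correct and is the standard Courant--Fischer proof of the Weyl inequalities; the dimension count and the sign-flip reduction for \eqref{eq2} are both fine. Note, however, that the paper does not supply its own proof of this theorem: it is quoted verbatim from \cite[Theorem 1.3.15]{cvetkovic2010intro} as a background tool, so there is nothing in the paper to compare your argument against.
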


\begin{theorem}\cite[Corollary 1.3.12]{cvetkovic2010intro}\label{interlacing}
Let $G$ be a graph with $n$ vertices and eigenvalues $\lambda_1 \geq \lambda_2 \geq \cdots \geq \lambda_n,$ and let $H$ be an induced subgraph of $G$ with $m$ vertices. If the eigenvalues of $H$ are $\mu_1 \geq \mu_2 \geq \cdots \geq \mu_m,$ then for $i=1,\ldots,m,$
\[
\lambda_{n-m+i} \leq \mu_i \leq \lambda_i.
\]    
\end{theorem}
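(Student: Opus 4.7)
The plan is to use the Courant-Fischer min-max characterization of eigenvalues, together with the observation that, since $H$ is an \emph{induced} subgraph of $G$ on $m$ vertices, after a relabeling of $V(G)$ the matrix $A(H)$ appears as a principal $m\times m$ submatrix of $A(G)$. In this form the statement is exactly the classical Cauchy interlacing theorem, so the task reduces to specializing that argument to our adjacency-matrix context. The key tool is that for any real symmetric $N\times N$ matrix $A$ with eigenvalues $\alpha_1\geq\cdots\geq\alpha_N$,
\[
\alpha_i(A) \;=\; \max_{\substack{S\leq\R^N\\ \dim S=i}} \min_{0\neq x\in S} \frac{x^\top A x}{x^\top x} \;=\; \min_{\substack{S\leq\R^N\\ \dim S=N-i+1}} \max_{0\neq x\in S} \frac{x^\top A x}{x^\top x}.
\]

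To obtain the upper inequality $\mu_i\leq\lambda_i$, I would select an $i$-dimensional subspace $S_0\leq\R^m$ attaining $\mu_i$ in the max-min formula for $A(H)$ and embed it into $\R^n$ by padding vectors with zeros on the coordinates indexed by $V(G)\setminus V(H)$. The resulting subspace $\widetilde{S}_0\leq\R^n$ is still $i$-dimensional, and for its vectors the Rayleigh quotients against $A(G)$ and against $A(H)$ coincide. Taking $\widetilde{S}_0$ as a test subspace in the max-min formula for $\lambda_i(G)$ then immediately yields $\lambda_i(G)\geq\mu_i$.

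The lower inequality $\lambda_{n-m+i}\leq\mu_i$ would follow by the dual construction: pick an $(m-i+1)$-dimensional subspace $T_0\leq\R^m$ realizing $\mu_i$ as a min-max for $A(H)$, and zero-pad it to a subspace of $\R^n$ of dimension $m-i+1 = n-(n-m+i)+1$, which is exactly what the min-max formula for $\lambda_{n-m+i}(G)$ requires. The only genuine obstacle is then the dimension bookkeeping, which is entirely elementary. Should one wish to sidestep the dual formula, a clean alternative is to iterate the single-border form of Cauchy interlacing: deleting one row and the symmetric column of a Hermitian matrix shifts each eigenvalue by at most one place in the ordering, and applying this reduction $n-m$ times to pass from $A(G)$ down to $A(H)$ recovers the full statement as claimed.
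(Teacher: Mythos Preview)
Your argument is correct: the passage from an induced subgraph to a principal submatrix, followed by the Courant--Fischer min--max characterization, is exactly the standard proof of Cauchy interlacing, and your dimension counts for both the upper and lower inequalities are right.

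However, there is nothing to compare against here. The paper does not prove this theorem at all; it merely quotes it as \cite[Corollary 1.3.12]{cvetkovic2010intro} and uses it later as a black box (in the proof of the result that $\mathcal{E}(G_S)\geq 2\lambda_1(G_S)$ under a non-clique hypothesis on $S$). So your proposal supplies a proof where the paper gives none, and the argument you give is the standard one found in the cited reference.
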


The \emph{energy} $\mathcal{E}(G)$ of $G$ was first introduced by Gutman \cite{gutman1979} to be the sum of all absolute eigenvalues of $G$:
\begin{equation}
\label{eq:energy1}
\mathcal{E}(G)=\sum\limits_{i=1}^n |\lambda_i(G)|.
\end{equation}
Despite a lot of research have been done on studying different aspects of this graph invariant, graph energy still remains intriguing to researchers. For more details about graph energy and its application in mathematical chemistry, we refer the readers to, for example, \cite{gutman&furtula2019,gutmanli2016,gutman&ramane2019,gutmanlishi2012} and references therein.

Let $S\subseteq V(G)$ and $|S|=\sigma$, where $0\leq\sigma\leq n$. For $G$ a simple graph of order $n$ and size $m$, denote by $G_S$ the \emph{self-loop graph} of $G$ at $S$, i.e. $G_S$ is the graph of order $n$ and size $m$, obtained from $G$ by attaching a self-loop (or simply a loop) at each vertex from the set $S.$ For clarity, $m$ always denotes the number of \textit{ordinary edges} and the number of loops $\sigma$ is not incorporated in $m.$ 
When $\sigma=0$, we write $G$ instead of $G_S$, while when $\sigma=n$, we will use the notation $\widehat{G}$. 
The adjacency matrix $A(G_S)$ of $G_S$ takes the form $A(G_S) = A(G)+\mathfrak{I}_S$, where $\mathfrak{I}_S$ is the ``almost'' identity matrix, with exactly $\sigma$ ones on the
main diagonal corresponding to $S$ and all other entries equal to zero. 
The (adjacency) eigenvalues $\lambda_1(G_S)\geq\lambda_2(G_S)\geq\cdots\geq\lambda_n(G_S)$ of $G_S$ are the eigenvalues of the matrix $A(G_S)$, and since $A(G_S)$ is square and symmetric, these eigenvalues  are reals. 

Recently, Gutman \textit{et al.} \cite{gutman2022energy} has initiated the study of the spectral properties of $A(G_S)$, amongst which we have the following relations.

\begin{lemma}
\label{lem1}
\cite{gutman2022energy}
Let $S\subseteq V(G)$ and $|S|=\sigma.$ Let $G_S$ be a self-loop graph of order $n$ and size $m.$ Let $\lambda_1(G_S) \geq \cdots \geq \lambda_n(G_S)$ be the eigenvalues of $G_S.$ Then,
\begin{enumerate}[(i)]
\item $\displaystyle \sum^n_{i=1} \lambda_i(G_S)=\sigma,$
\item $\displaystyle \sum^n_{i=1} \lambda^2_i(G_S)=2m+\sigma.$
\end{enumerate}
\end{lemma}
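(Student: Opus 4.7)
The plan is to prove both identities via the usual trace formulas $\sum_{i=1}^n\lambda_i(G_S)=\tr(A(G_S))$ and $\sum_{i=1}^n\lambda_i^2(G_S)=\tr(A(G_S)^2)$, using the decomposition $A(G_S)=A(G)+\mathfrak{I}_S$ supplied in the setup.

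For part (i), I would simply compute the diagonal of $A(G_S)$. Since $G$ is a simple graph, the adjacency matrix $A(G)$ has all diagonal entries equal to zero. The matrix $\mathfrak{I}_S$ is diagonal with exactly $\sigma$ ones on the positions corresponding to vertices in $S$ and zeros elsewhere. Hence $\tr(A(G_S))=\tr(A(G))+\tr(\mathfrak{I}_S)=0+\sigma=\sigma$, which gives (i).

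For part (ii), I would expand
\begin{equation*}
A(G_S)^2=\bigl(A(G)+\mathfrak{I}_S\bigr)^2=A(G)^2+A(G)\mathfrak{I}_S+\mathfrak{I}_S A(G)+\mathfrak{I}_S^2,
\end{equation*}
and take traces term by term. The standard identity $\tr(A(G)^2)=2m$ handles the first summand. Since $\mathfrak{I}_S$ is a $0$/$1$ diagonal matrix, $\mathfrak{I}_S^2=\mathfrak{I}_S$, so $\tr(\mathfrak{I}_S^2)=\sigma$. For the cross terms, observe that $(A(G)\mathfrak{I}_S)_{ii}=A(G)_{ii}(\mathfrak{I}_S)_{ii}=0$ because $A(G)$ has zero diagonal; the same reasoning applies to $\mathfrak{I}_S A(G)$. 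Summing the four contributions yields $\tr(A(G_S)^2)=2m+\sigma$, which is (ii).

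There is no real obstacle here; the only point worth being careful about is the bookkeeping convention in the statement, namely that the loop count $\sigma$ is not included in $m$. Under that convention the count $\tr(A(G)^2)=2m$ stays intact, and the contribution of each loop shows up only through $\mathfrak{I}_S$, giving the extra $+\sigma$ in both formulas (once in (i) from the diagonal, and twice-combined-into-once in (ii) via $\mathfrak{I}_S^2$ together with the vanishing of the cross terms).
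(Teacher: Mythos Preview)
Your proof is correct. Note, however, that the paper does not actually prove this lemma: it is stated with a citation to \cite{gutman2022energy} and used as a known fact, so there is no in-paper argument to compare against. Your trace computation via the decomposition $A(G_S)=A(G)+\mathfrak{I}_S$ is the standard way to establish these identities and is exactly what one would expect the original reference to do.
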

The energy $\mathcal{E}(G_S)$ of $G_S$ \cite{gutman2022energy} of order $n$ and  $|S|=\sigma$ is defined as 
\begin{equation}
\label{eq:energy2}  
\mathcal{E}(G_S)=\sum\limits_{i=1}^{n} \left|\lambda_i(G_S)-\frac{\sigma}{n}\right|.
\end{equation}

Since the research topics related to the energy of self-loop graphs are relatively new, we refer the readers to the several papers that have been published on this subject so far: \cite{akbari2023selfloop,anchan20232,anchan2023,jovanovic2023,popat&shingala2023,ShettyBhat2023, SomborQuanti2023}. Application-wise, self-loop graphs have also classically been found to manifest in mathematical chemistry, cf. \cite{gutman1979topological,
gutman1990topological,mallion1974graphical}.

In the paper, we adapt commonly used notations in Spectral Graph Theory. The \emph{complement} $\overline{G}$ of a graph $G$ is the graph with the same set of vertices as $G$ such that two distinct vertices in $\overline{G}$ are adjacent whenever they are not adjacent in $G.$  The \emph{line graph} $L(G)$ of a graph $G$ is the graph whose vertices are the edges of $G$, with two vertices in $L(G)$ adjacent whenever the corresponding edges in $G$ have exactly one vertex in common. $K_n$ is the \emph{complete graph} of order $n$, while the \emph{complete multipartite graph} $K_{n_1, n_2,\ldots, n_k}$ is the complement of the graph $G=K_{n_1}\,\dot{\cup}\,K_{n_2}\,\dot{\cup}\cdots\dot{\cup}\,K_{n_k}$, where $\dot{\cup}$ stands for disjoint union. If $n_1=n_2=\cdots=n_k=p$, we will use the label $K_{k\times p}$. The \emph{first Zagreb index} $M_1(G)$ of $G$ is defined as \cite{gutman&trinajstic1972}: $M_1(G)=\sum\limits_{i=1}^{n} d^2_G(v_i)$, while the \emph{first Zagreb index} $M_1(G_S)$ of $G_S$ is \cite{ShettyBhat2023}: $M_1(G_S)=\sum\limits_{i=1}^{n} d^2_{G_S}(v_i)$, where $d_{G_S}(v_i)$ is the degree of the vertex $v_i$ in $G_S$. In particular, if $v \in S,$ then $d_{G_S}(v) = d_{G}(v) +2.$ 
In Section 4, we will also discuss semiregular graphs, for clarity, we explain the notion of \textit{semiregular matrices}, cf. \cite{Zhou2000spectral}. A nonnegative matrix $A$ is \textit{row-regular} (resp. \textit{column-regular}) if all of its row (resp. column) sums are equal. $A$ is \textit{row-semiregular} (resp. \textit{column-semiregular}) if there exists a permutation matrix $P$ such that $P^TAP=\begin{pmatrix} 0 & B \\ C & 0 \end{pmatrix}$ where $B$ and $C$ are both row-regular (resp. column-regular). Then, the matrix $A$ is said to be \textit{regular} (resp. \textit{semiregular}) if $A$ is both row-regular and column-regular (resp. row and column-semiregular). 
For the remaining basic terminology and additional details, the reader is referred to \cite{cvetkovic1995spectra} and \cite{cvetkovic2010intro}.

The paper is organized as follows. In Section~\ref{Sec2}, we establish that $\mathcal{E}(G\setminus S) < \mathcal{E}(G_S)$ for a set $S$ of independent vertices in $G,$ and prove several results related to the line graph $L(G_S)$ of $G_S$. Here, we show that every eigenvalue of $L(G_S)$ must be not less than $-2.$ The relation between the characteristic polynomials of the line graph $L(\widehat{G})$ of $\widehat{G}$ and its  counterpart $L(G)$ is derived. In Section~\ref{Sec3}, some bounds on the eigenvalues of a self-loop graph are given, together with some lower bounds on its energy. Moreover, we show that the energy of a connected regular complete multipartite graph is not greater than the energy of the corresponding self-loop graph. In Section~\ref{Sec4}, besides an upper bound, we give a lower bound for the spectral radius of a self-loop graph in terms of $M_1(G)$ and $\delta(G).$ We also present two Nordhaus-Gaddum-type bounds for the spectral radius of $G_S$. In Section~\ref{Sec5}, a Nordhaus-Gaddum-type bound for the energy of a self-loop graph in terms of its order and the number of loops is exposed.

\section{The line graph of a self-loop graph}
\label{Sec2}

Let $\mathcal{M}_{m,n}$ be the set of $m\times n$ complex matrices, and let $M\in\mathcal{M}_{m,n}$. We write $M^{\star}$ for the Hermitian adjoint of $M$. The \emph{singular values} $s_1(M) \geq s_2(M) \geq \cdots \geq s_n(M)$ of a matrix $M$ are the square roots of the eigenvalues of $MM^{\star}$. Since $A=A(G)$ is a real and symmetric square matrix, it holds that $s_i(A)=|\lambda_i(G)|$, for $1\leq i\leq n$, and hence the energy of $G$ of order $n$  is the sum of the singular values of its adjacency matrix \cite{nikiforov2007}.

In \cite{DaySo2008}, the following theorem regarding the singular values of a matrix has been proved:

\begin{theorem}\cite[Theorem 2.2]{DaySo2008} \label{Day&So}
For a partitioned matrix 
$C=\left(
\begin{array}{cc}
A & X \\
Y & B \\
\end{array}
\right)
$, where both $A$ and $B$ are square matrices, we have:
$$\sum\limits_{j} s_j(A)+\sum\limits_{j} s_j(B)\leq\sum\limits_{j} s_j(C).$$
\end{theorem}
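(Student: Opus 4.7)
The plan is to recognize $\sum_j s_j(M)$ as the trace (nuclear) norm $\|M\|_*$ and then exploit its variational characterization in tandem with the Schatten-norm H\"older inequality $|\mathrm{tr}(WM)| \le \|W\|_{\mathrm{op}}\,\|M\|_*$, valid whenever the product $WM$ is defined.

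First, I would attain the nuclear norms of $A$ and $B$ separately by unitary ``dual'' matrices. Writing a singular value decomposition $A = U_A \Sigma_A V_A^*$ with $U_A, V_A$ unitary and $\Sigma_A$ diagonal with the singular values on the diagonal, the unitary $W_A := V_A U_A^*$ satisfies
\[
\mathrm{tr}(W_A A) \,=\, \mathrm{tr}(V_A \Sigma_A V_A^*) \,=\, \mathrm{tr}(\Sigma_A) \,=\, \sum_j s_j(A),
\]
and analogously I would pick a unitary $W_B$ for $B$ with $\mathrm{tr}(W_B B) = \sum_j s_j(B)$. The key construction is then the block-diagonal matrix
\[
W \,=\, \begin{pmatrix} W_A & 0 \\ 0 & W_B \end{pmatrix},
\]
which is itself unitary (hence has $\|W\|_{\mathrm{op}} = 1$) and is conformal with the block partition of $C$. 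A direct multiplication shows that the diagonal blocks of $WC$ are $W_A A$ and $W_B B$, so
\[
\mathrm{tr}(WC) \,=\, \mathrm{tr}(W_A A) + \mathrm{tr}(W_B B) \,=\, \sum_j s_j(A) + \sum_j s_j(B),
\]
and applying the Schatten-norm H\"older inequality yields
\[
\sum_j s_j(A) + \sum_j s_j(B) \,=\, \mathrm{tr}(WC) \,\le\, \|W\|_{\mathrm{op}}\,\|C\|_* \,=\, \sum_j s_j(C),
\]
which is exactly the desired estimate.

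The main non-trivial input is the Schatten-norm duality used in the last step, a classical fact about unitarily invariant norms that can equivalently be phrased as the Ky Fan dominance theorem applied to the full list of singular values; everything else is algebraic bookkeeping with SVDs and block multiplication. The geometric heart of the proof is therefore the block-diagonal unitary $W$, which aggregates the separately optimal dual certificates for $A$ and $B$ into a single contraction paired against the whole of $C$. An entirely parallel argument, replacing full unitaries by partial isometries of rank $k$, would actually produce the stronger statement that the sum of the top $k$ singular values of $A$ and the top $k$ singular values of $B$ is bounded above by the sum of the top $2k$ singular values of $C$, should that refinement be needed elsewhere in the paper.
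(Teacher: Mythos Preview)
Your argument is correct. The paper does not supply its own proof of this statement; it is quoted verbatim as \cite[Theorem~2.2]{DaySo2008} and used as a black box in the proof of Theorem~\ref{independent}. So there is nothing in the present paper to compare your argument against.

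For what it is worth, your route via the nuclear-norm duality $\lvert\tr(WM)\rvert \le \lVert W\rVert_{\mathrm{op}}\,\lVert M\rVert_*$ and the block-diagonal unitary $W=\mathrm{diag}(W_A,W_B)$ is clean and essentially the standard one: it is the same idea that underlies the Ky Fan norm proofs in Day--So's original paper (where the inequality is derived from Fan's dominance theorem). Your final remark about the rank-$k$ partial-isometry refinement, yielding $\sum_{j\le k} s_j(A)+\sum_{j\le k} s_j(B)\le \sum_{j\le 2k} s_j(C)$, is also correct and corresponds exactly to passing from the trace norm to the Ky Fan $k$-norms.
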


Recall that vertices, or edges, of a graph are said to be \emph{independent} if they are pairwise non-adjacent.

\begin{theorem}\label{independent}
Let $G$ be a graph of order $n$ $(n\geq 2)$, and let $S$ be a set of independent vertices in $G$, such that $|S|=\sigma$, $1\leq\sigma\leq n-1$. Let $G_S$ be the graph obtained by attaching a loop at each vertex in the set $S$. Then, $$\mathcal{E}(G\setminus S) < \mathcal{E}(G_S),$$
where $G\setminus S$ is the graph obtained from $G$ by deleting all vertices from the set $S$.
\end{theorem}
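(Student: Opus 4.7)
The plan is to realize $\mathcal{E}(G_S)$ as the sum of singular values of the shifted matrix $A(G_S)-(\sigma/n)I_n$ and then apply Theorem~\ref{Day&So} to a block decomposition that is made possible by the independence of $S$. Order the vertices of $G$ so that the $\sigma$ vertices of $S$ come first. Since $S$ is independent, $A(G)$ has block form with a zero block in the top-left, and therefore
\[
A(G_S)-\tfrac{\sigma}{n}I_n
=\begin{pmatrix}
\bigl(1-\tfrac{\sigma}{n}\bigr)I_\sigma & X \\
X^{T} & A(G\setminus S)-\tfrac{\sigma}{n}I_{n-\sigma}
\end{pmatrix},
\]
for some $\sigma\times(n-\sigma)$ matrix $X$. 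Because this matrix is real symmetric, its singular values are the absolute values of its eigenvalues, which are exactly the $\lambda_i(G_S)-\sigma/n$. Hence $\mathcal{E}(G_S)=\sum_i s_i\bigl(A(G_S)-\tfrac{\sigma}{n}I_n\bigr)$, which is the quantity that Theorem~\ref{Day&So} bounds from below by the singular value sums of the two diagonal blocks.

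Applying Theorem~\ref{Day&S} to the above partition yields
\[
\mathcal{E}(G_S)\;\geq\;\sigma\bigl(1-\tfrac{\sigma}{n}\bigr)
+\sum_{i=1}^{n-\sigma}\bigl|\lambda_i(G\setminus S)-\tfrac{\sigma}{n}\bigr|
=\frac{\sigma(n-\sigma)}{n}+\sum_{i=1}^{n-\sigma}\bigl|\lambda_i(G\setminus S)-\tfrac{\sigma}{n}\bigr|.
\]
To convert the right-hand side into $\mathcal{E}(G\setminus S)$, I use the triangle inequality in the form $|\lambda_i(G\setminus S)|\leq |\lambda_i(G\setminus S)-\sigma/n|+\sigma/n$, which holds for every $i$ and is an equality only when $\lambda_i(G\setminus S)\geq \sigma/n$. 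Summing over $i$ gives
\[
\sum_{i=1}^{n-\sigma}\bigl|\lambda_i(G\setminus S)-\tfrac{\sigma}{n}\bigr|\;\geq\;\mathcal{E}(G\setminus S)-\frac{(n-\sigma)\sigma}{n},
\]
so that $\mathcal{E}(G_S)\geq \mathcal{E}(G\setminus S)$; combining the two displays already produces the non-strict version of the claim.

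The last step, and the only subtle one, is to upgrade to a strict inequality. Equality in the triangle inequality would force $\lambda_i(G\setminus S)\geq \sigma/n>0$ for every $i$, which contradicts $\sum_{i=1}^{n-\sigma}\lambda_i(G\setminus S)=\tr A(G\setminus S)=0$ (note $n-\sigma\geq 1$ by hypothesis, and the case $n-\sigma=1$ is handled since the single eigenvalue equals $0<\sigma/n$). Hence the triangle inequality is strict for at least one index, the resulting inequality $\sum_i|\lambda_i(G\setminus S)-\sigma/n|>\mathcal{E}(G\setminus S)-(n-\sigma)\sigma/n$ is strict, and chaining it with the Day--So bound yields $\mathcal{E}(G_S)>\mathcal{E}(G\setminus S)$.

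The main obstacle I anticipate is purely conceptual rather than computational: recognizing that $\mathcal{E}(G_S)$, defined through the centered eigenvalues $\lambda_i(G_S)-\sigma/n$, coincides with the Ky Fan (trace) norm of $A(G_S)-(\sigma/n)I_n$, which is exactly the quantity Theorem~\ref{Day&So} controls. Once that identification is made, the independence of $S$ (giving the zero block) and a single application of the triangle inequality, together with the trace-zero property of $A(G\setminus S)$ to secure strictness, finish the argument cleanly.
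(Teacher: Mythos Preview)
Your proof is correct and follows essentially the same route as the paper: both exploit the block structure afforded by the independence of $S$, apply Theorem~\ref{Day&So}, and use a triangle inequality to reconcile the $\sigma/n$ shift in the definition of $\mathcal{E}(G_S)$. The only minor variation is that you apply Day--So to the already-shifted matrix $A(G_S)-(\sigma/n)I_n$ and then triangle-bound the eigenvalues of $G\setminus S$, whereas the paper applies Day--So to $A(G_S)$ itself and triangle-bounds the eigenvalues of $G_S$; your strictness argument via $\tr A(G\setminus S)=0$ is self-contained, while the paper quotes an external result (that $A(G_S)$ has a non-positive eigenvalue) for the same purpose.
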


\begin{proof}
The adjacency matrix $A(G_S)$ of $G_S$ is of the following form
$$A(G_S)=\left(
\begin{array}{cc}
A(G\setminus S) & M^{T} \\
M & I_{\sigma} \\
\end{array}
\right),
$$ 
where the matrix 
$M=(m_{ij})_{\sigma\times (n-\sigma)}$ satisfies that $m_{ij}=1$ if the vertex $i\in S$ is adjacent to the vertex $j\in V(G)\setminus S$, and $m_{ij}=0$, otherwise.

By Theorem \ref{Day&So}, we find:
\begin{align*}
&\mathcal{E}(G\setminus S)+\sigma = \sum\limits_{i=1}^{n-\sigma} |\lambda_i(A(G\setminus S))|+\sum\limits_{i=1}^{\sigma} |\lambda_i(I_{\sigma})| =  \sum\limits_{i=1}^{n-\sigma} s_i(A(G\setminus S))+ \sum\limits_{i=1}^{\sigma} s_i(I_{\sigma})\leq\\
& \sum\limits_{i=1}^{n} s_i(A(G_S))=\sum\limits_{i=1}^{n} |\lambda_i(A(G_S))|=\sum\limits_{i=1}^{n} \left|\lambda_i(A(G_S))-\frac{\sigma}{n}+\frac{\sigma}{n}\right| <  \\
& \sum\limits_{i=1}^{n} \left|\lambda_i(A(G_S))-\frac{\sigma}{n} \right|+\sigma= \mathcal{E}(G_S)+\sigma,
\end{align*}
where the last inequality follows from the fact that $A(G_S)$ has at least one non-positive eigenvalue  \cite[Theorem 2.6]{akbari2023selfloop}.
\end{proof}

In a similar way, one can prove the following statement:

\begin{theorem}
Let $G$ be a graph of order $n$ $(n\geq 2)$, and let $Q$ be a set of vertices which form a clique in $G$, such that $|Q|=\sigma$, $1\leq\sigma\leq n-1$. Let $G_Q$ be a graph obtained by attaching a loop at each vertex in the set $Q$. Then $$\mathcal{E}(G\setminus Q) < \mathcal{E}(G_Q).$$
\end{theorem}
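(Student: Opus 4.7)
The plan is to adapt the proof of Theorem \ref{independent}, replacing the identity block $I_\sigma$ by an all-ones block. Since $Q$ is a clique in $G$, the principal submatrix of $A(G)$ indexed by $Q$ equals $A(K_\sigma) = J_\sigma - I_\sigma$; attaching a loop at each vertex of $Q$ therefore replaces this block by $A(K_\sigma) + I_\sigma = J_\sigma$, where $J_\sigma$ denotes the $\sigma \times \sigma$ all-ones matrix. Writing $M = (m_{ij})$ for the $\sigma \times (n-\sigma)$ matrix recording adjacencies between $Q$ and $V(G) \setminus Q$, one obtains the block form
$$A(G_Q) = \begin{pmatrix} A(G\setminus Q) & M^T \\ M & J_\sigma \end{pmatrix}.$$

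The key numerical input is that $\sum_{j} s_j(J_\sigma) = \sigma$: indeed, $J_\sigma$ is symmetric positive semidefinite with spectrum $\{\sigma, 0, \ldots, 0\}$, so its singular values sum to exactly $\sigma$. This matches the quantity $\sum_j s_j(I_\sigma) = \sigma$ used in the proof of Theorem \ref{independent}. Applying Theorem \ref{Day&So} to the block form above therefore gives
$$\mathcal{E}(G\setminus Q) + \sigma \;\leq\; \sum_{i=1}^{n} s_i(A(G_Q)) \;=\; \sum_{i=1}^{n} |\lambda_i(A(G_Q))|.$$

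The remainder of the argument is identical to the proof of Theorem \ref{independent}: I would insert and subtract $\sigma/n$ inside each $|\lambda_i(A(G_Q))|$, apply the triangle inequality, and use the fact that $A(G_Q)$ has at least one non-positive eigenvalue (\cite[Theorem 2.6]{akbari2023selfloop}) to upgrade the triangle inequality to a strict one. This yields $\sum_i |\lambda_i(A(G_Q))| < \mathcal{E}(G_Q) + \sigma$, and subtracting $\sigma$ from both ends gives the claimed bound $\mathcal{E}(G \setminus Q) < \mathcal{E}(G_Q)$. I do not anticipate a real obstacle here: the structural reason the argument goes through uniformly for independent sets and cliques is precisely the coincidence that both $I_\sigma$ and $J_\sigma$ contribute exactly $\sigma$ to the singular-value sum, so once this is observed the two proofs are formally the same.
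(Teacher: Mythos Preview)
Your proposal is correct and follows exactly the approach the paper intends: the paper does not spell out the proof but simply says ``In a similar way, one can prove the following statement,'' and your adaptation---replacing the block $I_\sigma$ by $J_\sigma$ and noting that both have singular-value sum equal to $\sigma$---is precisely that similar way.
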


Let $G$ be a graph of order $n$ $(n\geq 2)$, and let $G_S$ be the self-loop graph of $G$ such that $S\subseteq V(G)$ and $|S|=\sigma$. 
Following \cite{Marczyk1985}, we can define:

\begin{definition}
The \emph{line graph} $L(G_S)$ of $G_S$ is a graph whose vertices are the edges of $G_S$, with two vertices in $L(G_S)$ adjacent whenever the corresponding edges in $G_S$ have exactly one vertex in common. Each loop attached at a vertex $v$ in $G_S$ is the vertex with a loop in $L(G_S)$, and this vertex is adjacent to those vertices in $L(G_S)$ which correspond to the edges of $G_S$ incident with the vertex $v$ in $G_S$.
\end{definition}

Let us notice that vertices with loops in $L(G_S)$ form a set of independent vertices, as well as that the line graph $L(G)$ of $G$ is an exact (i.e. induced) subgraph of $L(G_S)$.

From the definition, it follows that the adjacency matrix $A(L(G_S))$ of $L(G_S)$ is of the following form
$$A(L(G_S))=\left(
\begin{array}{cc}
A(L(G)) & M^{T} \\
M & I_{\sigma} \\
\end{array}
\right)
.$$
Here, $M$ stands for the loop-edge adjacency matrix. Precisely, if $G_S$ is a self-loop graph of order $n$, size $m$ and $|S|=\sigma$, then $M=(m_{ij})_{\sigma\times m}$, where $m_{ij}=1$ if the loop $i$ is adjacent to the edge $j$ in $L(G_S)$, and $m_{ij}=0$ otherwise. Observe that when $\sigma=n$, 
then $M=B(G)$, where $B(G)$ is the incidence matrix of $G$.

\begin{corollary}
Let $G$ be a graph of order $n$ $(n\geq 2)$. Suppose $\emptyset\neq S\subseteq V(G)$. 
Then,
$$\mathcal{E}(L(G))<\mathcal{E}(L(G_S)).$$
\end{corollary}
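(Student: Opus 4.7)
The plan is to recast the problem so that Theorem \ref{independent} can be applied directly. Setting $m = |E(G)|$, I would first observe that $L(G_S)$ is itself a self-loop graph: if we let $H$ be the simple graph of order $m + \sigma$ obtained by deleting the $\sigma$ self-loops at the loop-vertices of $L(G_S)$, then $L(G_S) = H_{S'}$, where $S' \subseteq V(H)$ is the set of those $\sigma$ loop-vertices. Equivalently, the block form of $A(L(G_S))$ displayed just above the corollary exhibits $A(L(G_S))$ as $A(H) + \mathfrak{I}_{S'}$.

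Next, I would check the two structural facts that feed Theorem \ref{independent}. First, $S'$ is an \emph{independent} set of vertices of $H$: two distinct self-loops in $G_S$ share no common vertex of $G_S$, so by the definition of $L(G_S)$ their corresponding vertices are non-adjacent, and erasing the self-loops at these vertices does not introduce any new adjacency. This is also precisely the remark made in the excerpt that vertices with loops in $L(G_S)$ form an independent set. Second, deleting $S'$ together with its incident edges from $H$ removes exactly the loop-edge incidences added in the definition of $L(G_S)$, so $H \setminus S' = L(G)$, which matches the observation that $L(G)$ is an induced subgraph of $L(G_S)$.

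With these ingredients, Theorem \ref{independent} applied to the simple graph $H$ and the independent set $S' \subseteq V(H)$ yields
\[
\mathcal{E}(L(G)) \;=\; \mathcal{E}(H \setminus S') \;<\; \mathcal{E}(H_{S'}) \;=\; \mathcal{E}(L(G_S)),
\]
provided $m \geq 1$ so that the hypotheses $|V(H)| \geq 2$ and $1 \leq |S'| \leq |V(H)| - 1$ of Theorem \ref{independent} are satisfied.

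The only obstacle I foresee is bookkeeping: carefully identifying $L(G_S)$ with $H_{S'}$ and verifying that the $\sigma$ loop-vertices remain mutually non-adjacent in the underlying simple graph $H$ (not merely in $L(G_S)$). The degenerate case $m = 0$ is uninteresting, since then $L(G)$ is empty and $L(G_S)$ reduces to $\sigma$ isolated loop-vertices whose contribution to $\mathcal{E}(L(G_S))$ also vanishes; so the statement is implicitly made under the assumption that $G$ has at least one edge, in which case the reduction above closes the argument.
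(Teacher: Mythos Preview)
Your proposal is correct and follows exactly the approach of the paper: the paper's proof is the single sentence ``apply Theorem~\ref{independent} to the set of vertices with loops in $L(G_S)$,'' and your argument is a careful unpacking of precisely that application. Your observation that the case $m=0$ must be excluded (since then both energies vanish and the strict inequality fails, and Theorem~\ref{independent} requires $|S'|\leq |V(H)|-1$) is a point the paper leaves implicit.
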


\begin{proof}
The proof follows from Theorem \ref{independent} applied to the set $S$ of vertices with loops in $L(G_S)$. 
\end{proof}

The \emph{incidence matrix} $B(G_S)$ of $G_S$ can be defined in full analogy with the incidence matrix $B(G)$ of $G$. Suppose that $G$ is a graph of order $n$ and size $m$, with the set of vertices $V(G)=\{v_1, v_2,\ldots, v_n\}$, and the set of edges $E(G)=\{e_1, e_2,\ldots, e_m\}$. Let $\mathcal{L}=\{e_{m+1}, e_{m+2},\ldots, e_{m+\sigma}\}$ be the set of loops of $G_S$. The incidence matrix $B(G_S)=(b^S_{ij})$ of $G_S$ is an $n\times (m+\sigma)$ matrix defined as 
$$b^S_{ij}=\left\{
           \begin{array}{ll}
             1, & \hbox{if $v_i$ and $e_j$ are incident;} \\
             0, & \hbox{otherwise.}
           \end{array}
         \right.
$$
Actually, $B(G_S)=\left(B(G)\,|\,N\right)$, where $N$ can be interpreted as the vertex-loop incidence matrix of $G_S$. Precisely, $N$ is a $n\times \sigma$ matrix, such that in each column of $N$ there is exactly one $1$, and all other entries are equal to $0$.

We find it is convenient to define the incidence matrix of a self-loop graph $G_S$ in this way. Namely, if we were to define the \textit{signless Laplacian matrix} $Q(G_S)=(q_{ij})_{n \times n}$ of $G_S$ by analogy with how the Laplacian matrix of a self-loop graph is defined in \cite{anchan20232}, i.e.
$$q_{ij}=\left\{
  \begin{array}{lll}
    1, & \hbox{if vertices $v_i$ and $v_j$ are adjacent,} \\
    0, & \hbox{if vertices $v_i$ and $v_j$ are not adjacent,} \\
    d_G(v_i)+1, & \hbox{if $i=j$ and $v_i\in S$,}\\
    d_G(v_i), & \hbox{if $i=j$ and $v_i\not\in S$,}
  \end{array}
\right.$$
we would obtain that $B(G_S)B(G_S)^T=Q(G_S)$. An analogous relation holds for the corresponding matrices of graphs without loops, i.e., $B(G)B(G)^T=Q(G)$,  where $Q(G)$ is the signless Laplacian matrix of $G,$ see \cite[Equation (7.29)]{cvetkovic2010intro}.

\begin{theorem}
For every eigenvalue $\lambda$ of $L(G_S),$ $\lambda\geq -2$.
\end{theorem}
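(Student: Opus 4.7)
The plan is to realize $A(L(G_S)) + 2I$ as a positive semidefinite matrix, in analogy with the classical identity $B(G)^T B(G) = A(L(G)) + 2I_m$ which already yields the lower bound $-2$ for the ordinary line graph. The difference is that the bottom-right block of $A(L(G_S))$ equals $I_\sigma$ (the loops in $L(G_S)$ form an independent set of loop-vertices), so we must account for an extra contribution on the loop-block.

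First, I would compute $B(G_S)^T B(G_S)$ blockwise using $B(G_S) = (B(G)\,|\,N)$. The top-left block is the standard $B(G)^T B(G) = A(L(G)) + 2I_m$. The top-right block $B(G)^T N$ has $(i,j)$-entry equal to $1$ iff the edge $e_i$ of $G$ is incident to the (unique) vertex supporting the $j$-th loop, which is exactly $M^T$ from the definition of $A(L(G_S))$; symmetrically the bottom-left block is $M$. The bottom-right block $N^T N$ is simply $I_\sigma$, because each column of $N$ has exactly one $1$ and distinct loops correspond to distinct vertices of $S$. Hence
\begin{equation*}
B(G_S)^T B(G_S) \;=\; \begin{pmatrix} A(L(G)) + 2I_m & M^T \\ M & I_\sigma \end{pmatrix}.
\end{equation*}

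Next I would compare this with
\begin{equation*}
A(L(G_S)) + 2I_{m+\sigma} \;=\; \begin{pmatrix} A(L(G)) + 2I_m & M^T \\ M & 3 I_\sigma \end{pmatrix} \;=\; B(G_S)^T B(G_S) + \begin{pmatrix} 0 & 0 \\ 0 & 2 I_\sigma \end{pmatrix}.
\end{equation*}
The right-hand side is a sum of two positive semidefinite matrices (the first a Gram matrix, the second a nonnegative diagonal), hence is positive semidefinite. A cleaner alternative that avoids the additive decomposition is to write $A(L(G_S)) + 2I = \widetilde{B}^T \widetilde{B}$ outright, using the augmented matrix
\begin{equation*}
\widetilde{B} \;=\; \begin{pmatrix} B(G) & N \\ 0 & \sqrt{2}\, I_\sigma \end{pmatrix},
\end{equation*}
which gives the required block structure in one stroke.

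Either formulation immediately yields $A(L(G_S)) + 2I \succeq 0$, so every eigenvalue of $L(G_S)$ is at least $-2$. I do not expect any serious obstacle: the only mildly delicate point is verifying that $B(G)^T N = M^T$ and $N^T N = I_\sigma$, both of which are immediate from the definitions of $N$ (one $1$ per column, placed at the looped vertex) and of $M$ (loop-edge adjacency in $L(G_S)$).
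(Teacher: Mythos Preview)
Your proof is correct and follows essentially the same route as the paper: both compute $B(G_S)^T B(G_S)$ blockwise, verify that the off-diagonal blocks equal $M$ and $M^T$ and that $N^TN=I_\sigma$, and then compare with $A(L(G_S))$ up to a diagonal correction. The only difference is the finishing step: the paper writes $A(L(G_S)) = B(G_S)^T B(G_S) - 2\mathfrak{I}_m$ and applies the Courant--Weyl inequality to bound the smallest eigenvalue, whereas you write $A(L(G_S)) + 2I_{m+\sigma} = B(G_S)^T B(G_S) + \mathrm{diag}(0,2I_\sigma)$ and conclude positive semidefiniteness directly as a sum of two PSD matrices (or, even more cleanly, via the single Gram factorization $\widetilde{B}^T\widetilde{B}$). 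Your ending is slightly more elementary since it avoids Weyl's inequality altogether, but the substance of the argument is the same.
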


\begin{proof}
Let $N^T=(\mathfrak{n}_{ij})_{\sigma\times n}$ and $B(G)=(b_{jk})_{n\times m}$. The $(i,k)$-th entry of the matrix $N^TB(G)$ is equal to $\sum_{p=1}^{n} \mathfrak{n}_{ip}b_{pk}$. The addition $\mathfrak{n}_{ip}b_{pk}$, for every $p=1, 2, \ldots, n$, is equal to 1 when $\mathfrak{n}_{ip}=1$ and $b_{pk}=1$, which means that both the loop $i$ and the edge $k$ are incident with the vertex $p$ in $G$, i.e. that the loop $i$ is adjacent to the edge $k$ in $L(G_S)$. Since there is at most one loop attached at each vertex of $G$, it follows that $N^TB(G)=M$. So, we have:
\begin{align*}
    B(G_S)^T B(G_S) & =\left(
  \begin{array}{cc}
    B(G)^TB(G) & B(G)^TN \\
    N^TB(G) & N^TN \\
  \end{array}
\right)= \left(
  \begin{array}{cc}
    A(L(G))+2I_m & M^T \\
    M & I_\sigma \\
  \end{array}
\right)\\
& = A(L(G_S))+2\,\mathfrak{I}_m,
\end{align*}
since $ B(G)^TB(G)=A(L(G))+2I_m$ (see \cite[Equality (1.2)]{cvetkovic2010intro}). Here, $\mathfrak{I}_m$ is the square matrix of order $m+\sigma$ with exactly $m$ ones on the main diagonal, and all other entries equal to zero, i.e.
$$\mathfrak{I}_m=\left(
                  \begin{array}{cc}
                    I_m & O^T \\
                    O & O \\
                  \end{array}
                \right)
.$$
For any vector $x\in \mathbb{R}^{m+\sigma}$, we find
$$x^T B(G_S)^T B(G_S)\,x=(B(G_S) x)^T B(G_S)\,x\geq 0,$$
which means that $B(G_S)^T B(G_S)$ is a positive-se\-mi\-de\-fi\-ni\-te matrix. 
By applying Inequality (\ref{eq2}) from Theorem \ref{Weylthm} to matrices $B(G_S)^T B(G_S)$ and $-2\,\mathfrak{I}_m$, we obtain
\[
\lambda_{m+\sigma}(L(G_S))\geq \lambda_{m+\sigma}(B(G_S)^T B(G_S))+\lambda_{m+\sigma}(-2\,\mathfrak{I}_m)\geq-2. \qedhere
\]
\end{proof}

The following statement will be used in the proof of Theorem \ref{characteristic_line}.

\begin{lemma}\cite[Equation (2.29)]{cvetkovic2010intro} \label{lemma}
Let $M$ be a non-singular square matrix. If $Q$ is also a square matrix, then, 
$$\det\left(
        \begin{array}{cc}
          M & N \\
          P & Q \\
        \end{array}
      \right)
=\det(M) \det(Q-P\,M^{-1}\,N).$$
\end{lemma}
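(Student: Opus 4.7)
The plan is to establish this as a standard Schur-complement identity via a block triangular factorization. Since $M$ is non-singular, $M^{-1}$ exists, and I would first exhibit the decomposition
\[
\begin{pmatrix} M & N \\ P & Q \end{pmatrix}
= \begin{pmatrix} I & 0 \\ PM^{-1} & I \end{pmatrix}
\begin{pmatrix} M & N \\ 0 & Q - PM^{-1}N \end{pmatrix},
\]
which I would verify by direct block multiplication (the four block entries on the right reproduce $M$, $N$, $P$, and $PM^{-1}N + (Q-PM^{-1}N) = Q$, respectively).

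Once this factorization is in place, I would take determinants on both sides. The first factor is block lower-triangular with identity blocks on the diagonal, hence has determinant $1$; the second factor is block upper-triangular, whose determinant equals the product of the determinants of its diagonal blocks, namely $\det(M)\cdot\det(Q-PM^{-1}N)$. Multiplicativity of the determinant then yields
\[
\det\begin{pmatrix} M & N \\ P & Q \end{pmatrix} = \det(M)\,\det(Q - PM^{-1}N),
\]
as claimed.

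There is essentially no genuine obstacle here: the only subtlety is to justify that the determinant of a block triangular matrix equals the product of the determinants of its diagonal blocks, which one can either quote as a standard fact or prove briefly by cofactor expansion along the appropriate rows (in the lower-triangular case) or columns (in the upper-triangular case). The non-singularity hypothesis on $M$ is used exactly once, to make sense of $M^{-1}$ inside the factorization; without it, the identity in its stated form does not apply.
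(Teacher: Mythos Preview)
Your argument is correct: the block factorization you exhibit is easily verified, and the determinant of a block triangular matrix is indeed the product of the determinants of its diagonal blocks, which gives the identity immediately.

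Note, however, that the paper does not supply a proof of this lemma at all; it is simply quoted as a known identity from \cite{cvetkovic2010intro} and then applied in the proof of Theorem~\ref{characteristic_line}. So there is no ``paper's proof'' to compare against---your Schur-complement factorization is the standard way to establish this formula, and it would serve perfectly well as a self-contained justification if one were wanted.
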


\begin{theorem}\label{characteristic_line}
Let $G$ be a graph of order $n$ and size $m$. Then,
$$P_{L(\widehat{G})}(\lambda)=(\lambda-1)^{n-m}\lambda^m P_{L(G)}\left(\frac{\lambda^2-\lambda-2}{\lambda}\right).$$
\end{theorem}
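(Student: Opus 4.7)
The plan is to compute $P_{L(\widehat{G})}(\lambda)$ directly from the block form of $A(L(\widehat{G}))$ recorded in the proof of the preceding theorem, and then apply the Schur complement formula (Lemma just above the statement).

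First, specialise the incidence-matrix machinery to the case $S=V(G)$, i.e.\ $\sigma=n$. Then the vertex-loop incidence matrix $N$ is exactly $I_n$, so $B(\widehat G)=(B(G)\mid I_n)$. Plugging this into the identity
\[
A(L(G_S)) \;=\; B(G_S)^T B(G_S) - 2\,\mathfrak{I}_m
\]
established in the proof of the preceding theorem, and using $B(G)^T B(G)=A(L(G))+2I_m$, one obtains the clean block form
\[
A(L(\widehat G))=\begin{pmatrix} A(L(G)) & B(G)^T \\ B(G) & I_n \end{pmatrix}.
\]

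Next, write
\[
\lambda I_{m+n}-A(L(\widehat G))=\begin{pmatrix} \lambda I_m-A(L(G)) & -B(G)^T \\ -B(G) & (\lambda-1)I_n \end{pmatrix},
\]
and apply Lemma (the Schur-complement determinant formula) with the bottom-right block $(\lambda-1)I_n$ (which is non-singular for $\lambda\neq 1$). This gives
\[
P_{L(\widehat G)}(\lambda)=(\lambda-1)^n\det\!\Bigl(\lambda I_m-A(L(G))-\tfrac{1}{\lambda-1}B(G)^TB(G)\Bigr).
\]
Substituting $B(G)^TB(G)=A(L(G))+2I_m$ collapses the inside to
\[
\Bigl(\lambda-\tfrac{2}{\lambda-1}\Bigr)I_m-\Bigl(1+\tfrac{1}{\lambda-1}\Bigr)A(L(G))=\tfrac{\lambda}{\lambda-1}\Bigl(\tfrac{\lambda^2-\lambda-2}{\lambda}I_m-A(L(G))\Bigr),
\]
and pulling the scalar $\lambda/(\lambda-1)$ out of the $m\times m$ determinant yields the claimed identity for $\lambda\neq 1$. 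Both sides are polynomials in $\lambda$ (note $\lambda^m P_{L(G)}((\lambda^2-\lambda-2)/\lambda)$ is polynomial because $\deg P_{L(G)}=m$), so the equality extends by continuity to all $\lambda\in\mathbb C$.

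The only non-routine step is the first one, verifying that with $S=V(G)$ the off-diagonal block in $A(L(\widehat G))$ is literally $B(G)$ (equivalently, that the loop-edge adjacency matrix $M$ introduced earlier coincides with $B(G)$ when $\sigma=n$); this follows from the computation $M=N^TB(G)=I_n B(G)=B(G)$ carried out inside the proof of the preceding theorem. After that, the argument is just the Schur complement plus elementary algebra.
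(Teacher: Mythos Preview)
Your argument is essentially the same as the paper's: the paper writes the block form with the $I_n$ block in the upper-left corner (so the non-singular block $(\lambda-1)I_n$ sits in the $M$-position of the Schur lemma as stated), then substitutes $B(G)^TB(G)=A(L(G))+2I_m$ and simplifies exactly as you do. The only differences are cosmetic (your block ordering is swapped) and that you are slightly more careful, explicitly noting the computation is valid for $\lambda\neq 1$ and extending to all $\lambda$ by polynomial identity---a point the paper leaves implicit.
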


\begin{proof}
Since the adjacency matrix $A(L(\widehat{G}))$ of $L(\widehat{G})$ is of the form
$$A(L(\widehat{G}))=\left(
              \begin{array}{cc}
                I_n & B(G) \\
                B(G)^T & A(L(G)) \\
              \end{array}
            \right)
,$$
by Lemma \ref{lemma}, we obtain:
\begin{align*}
P_{L(\widehat{G})}(\lambda)= & \det\left(\lambda I_{m+n}-A(L(\widehat{G}))\right)\\
                   = & \det\left((\lambda-1)I_n\right)\det\left(\lambda I_m-A(L(G))-B(G)^T((\lambda-1)I_n)^{-1}B(G)\right)\\
                   = & (\lambda-1)^n\,\det\left(\lambda I_m-A(L(G))-\frac{1}{\lambda-1}B(G)^T\,B(G)\right)\\
                   = & (\lambda-1)^n\,\det\left(\lambda I_m-A(L(G))-\frac{1}{\lambda-1}\left(A(L(G))+2I_m\right)\right)\\
                   = & (\lambda-1)^n\,\det\left(\frac{\lambda^2-\lambda-2}{\lambda-1}I_m-\frac{\lambda}{\lambda-1}A(L(G))\right)\\
                   = & (\lambda-1)^{n-m}\lambda^m\det\left(\frac{\lambda^2-\lambda-2}{\lambda}I_m-A(L(G))\right)\\
                   = & (\lambda-1)^{n-m}\lambda^mP_{L(G)}\left(\frac{\lambda^2-\lambda-2}{\lambda}\right).  \qedhere
\end{align*}
\end{proof}

\section{Some new bounds for the eigenvalues of self-loop graphs and its energy}
\label{Sec3}



In this section, some bounds for the eigenvalues of a self-loop graph are exposed, together with some lower bounds for the energy of such a graph. We start with the following statement:

\begin{theorem}\label{spectrum}
Let $G$ be a graph of order $n$, whose eigenvalues with respect to the adjacency matrix are $\lambda_1(G)\geq\lambda_2(G)\geq\cdots\geq\lambda_n(G)$. Let $G_S$ be the self-loop graph of $G$, such that $S\subseteq V(G)$ and $|S|=\sigma$ $(0\leq\sigma\leq n)$. Then, for the eigenvalues $\lambda_i(G_S)$, $1\leq i\leq n$, of $G_S$ the following holds
\begin{equation}\label{eq:CW}
\lambda_i(G)\leq\lambda_i(G_S)\leq\lambda_i(G)+1, \, 1\leq i\leq n.
\end{equation}
The left-hand side inequality in \eqref{eq:CW} is attained for $\sigma=0$, while the right-hand side inequality for $\sigma=n$.
\end{theorem}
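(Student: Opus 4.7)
The plan is to regard $A(G_S) = A(G) + \mathfrak{I}_S$ as a sum of two real symmetric matrices and apply the Courant-Weyl inequalities stated in Theorem \ref{Weylthm}. The key point is that $\mathfrak{I}_S$ is diagonal with exactly $\sigma$ ones and $n-\sigma$ zeros on the diagonal, so its spectrum is completely transparent: the eigenvalues of $\mathfrak{I}_S$ are $1$ with multiplicity $\sigma$ and $0$ with multiplicity $n-\sigma$. In particular, regardless of $\sigma$, one always has $\lambda_1(\mathfrak{I}_S) \leq 1$ and $\lambda_n(\mathfrak{I}_S) \geq 0$.

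With this observation, the proof is a straightforward substitution. For the upper bound, I set $j=i$ in inequality \eqref{eq1} of Theorem \ref{Weylthm}, taking $A=A(G)$ and $B=\mathfrak{I}_S$, to obtain
\[
\lambda_i(G_S) = \lambda_i(A(G) + \mathfrak{I}_S) \leq \lambda_i(A(G)) + \lambda_1(\mathfrak{I}_S) \leq \lambda_i(G) + 1.
\]
For the lower bound, I set $j=i$ in inequality \eqref{eq2} of the same theorem to get
\[
\lambda_i(G_S) = \lambda_i(A(G) + \mathfrak{I}_S) \geq \lambda_i(A(G)) + \lambda_n(\mathfrak{I}_S) \geq \lambda_i(G).
\]

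For the equality claims, I will verify them directly from the definition of $\mathfrak{I}_S$. When $\sigma = 0$, the matrix $\mathfrak{I}_S$ is the zero matrix, so $A(G_S) = A(G)$, and the left-hand inequality is attained with equality for every $i$. When $\sigma = n$, we have $\mathfrak{I}_S = I_n$, so $A(G_S) = A(G) + I_n$; since adding $I_n$ simply shifts each eigenvalue by $1$, the right-hand inequality is attained with equality for every $i$.

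There is essentially no obstacle here: once the decomposition $A(G_S) = A(G) + \mathfrak{I}_S$ is recognized and the trivial spectrum of $\mathfrak{I}_S$ is noted, the two inequalities in \eqref{eq:CW} drop out of Theorem \ref{Weylthm} immediately, and the boundary equality cases are verified by direct inspection. The only minor subtlety worth mentioning is that the bounds $\lambda_1(\mathfrak{I}_S) \leq 1$ and $\lambda_n(\mathfrak{I}_S) \geq 0$ are valid for all $\sigma$ in the range $0 \leq \sigma \leq n$, so no case split on $\sigma$ is needed to derive the inequalities themselves.
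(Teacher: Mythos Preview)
Your proof is correct and follows essentially the same approach as the paper: decompose $A(G_S)=A(G)+\mathfrak{I}_S$, note the trivial spectrum of $\mathfrak{I}_S$, and apply the Courant--Weyl inequalities with $j=i$ in both \eqref{eq1} and \eqref{eq2}. Your treatment of the equality cases $\sigma=0$ and $\sigma=n$ by direct inspection of $\mathfrak{I}_S$ is likewise the same as the paper's.
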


\begin{proof}
The adjacency matrix $A(G_S)$ of $G_S$ is of the form $A(G_S)=A(G)+\mathfrak{I}_{S}$, where $A(G)$ is the adjacency matrix of $G$, while $\mathfrak{I}_{S}$ is a matrix with exactly $\sigma$ ones on the main diagonal and all other entries equal to zero. The eigenvalues of $\mathfrak{I}_{S}$ are $[1]^{\sigma}$ and $[0]^{n-\sigma}$.

\noindent Let us suppose that $0<\sigma<n$. Then, for $i=j$, from Inequality \eqref{eq1}, we find
$$\lambda_i(G_S)\leq\lambda_i(G)+1,\, 1\leq i\leq n,$$
while from Inequality \eqref{eq2}, we obtain
$$\lambda_i(G_S)\geq \lambda_i(G),\, 1\leq i\leq n.$$
For the equalities, we adopt the argument of \cite[Lemma 1, proof]{gutman2022energy}: the case when $\sigma=0$ is clear because $G_S$ coincides with $G;$ when $\sigma=n,$ $\lambda_i(G_S)=\lambda_i(G)+1$ due to $A(G_S)=A(G)+I_n.$  
\end{proof}

\begin{remark}
Since $\lambda_1(G)\leq \Delta$, using Theorem \ref{spectrum}, we obtain $\lambda_1(G_S)\leq \Delta +1$, which is given by  \cite[Theorem 5.1]{akbari2023selfloop}.
\end{remark}


\begin{corollary} \label{Stanleyineq}
Let $G_S$ be the self-loop graph of a graph $G$ of size $m$, and $\lambda_1(G_S)$ be the spectral radius of $G_S.$ Then,
\begin{equation}\label{sradius}
\lambda_1(G_S) \leq \frac{1}{2}(1+\sqrt{1+8m}).
\end{equation}
\end{corollary}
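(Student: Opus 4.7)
The plan is to derive this inequality as a direct consequence of Theorem \ref{spectrum}, combined with Stanley's classical upper bound for the spectral radius of an ordinary (loopless) graph.

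First, I would recall Stanley's inequality: for any simple graph $G$ of size $m$,
\[
\lambda_1(G) \leq \frac{-1+\sqrt{1+8m}}{2}.
\]
This is a standard fact, provable from $\sum_{i=1}^n \lambda_i(G) = 0$ and $\sum_{i=1}^n \lambda_i(G)^2 = 2m$ via the Cauchy--Schwarz estimate $\lambda_1(G)^2 + \lambda_1(G) \leq 2m$. Since the corollary is stated in terms of the size $m$ (the number of ordinary edges, not counting loops), and Stanley's bound refers to exactly this quantity, it applies directly to the underlying simple graph $G$ whose size is $m$.

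Next, I would apply the right-hand inequality of \eqref{eq:CW} from Theorem \ref{spectrum} in the special case $i=1$, which gives
\[
\lambda_1(G_S) \leq \lambda_1(G) + 1.
\]
Combining these two inequalities yields
\[
\lambda_1(G_S) \;\leq\; 1 + \frac{-1+\sqrt{1+8m}}{2} \;=\; \frac{1+\sqrt{1+8m}}{2},
\]
which is precisely \eqref{sradius}.

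There is no real obstacle here; the only thing worth checking is that the count $m$ used in Stanley's bound matches the convention adopted in the paper, where $m$ denotes the number of ordinary edges of $G_S$ (equivalently, the size of $G$), and loops are excluded. With that convention made explicit at the start of the proof, the argument is a clean two-line combination of Theorem \ref{spectrum} and Stanley's inequality.
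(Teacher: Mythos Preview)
Your proof is correct and follows essentially the same route as the paper's own proof: apply Stanley's bound $\lambda_1(G)\leq\frac{-1+\sqrt{1+8m}}{2}$ to the underlying simple graph, then add $1$ via the right-hand inequality of Theorem~\ref{spectrum}. The only difference is that you include a brief justification of Stanley's inequality, whereas the paper simply cites it.
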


\begin{proof}
By Stanley's Inequality in \cite{Stanley1987}, $\lambda_1(G) \leq \frac12 (-1 + \sqrt{1+8m}).$ The inequality (\ref{sradius}) follows immediately from Theorem~\ref{spectrum}.
\end{proof}


Let us recall the following statement.

\begin{theorem}\cite[Theorem 5.2]{akbari2023selfloop}
\label{thm5.2}
Let $G$ be a connected graph of order $n$ and size $m$. If $S \subseteq V(G)$ with $|S|=\sigma,$ then
\[
\lambda_1(G_S) \geq \frac{2m}{n} + \frac{\sigma}{n}.
\]
If $G$ is a $(k,k+1)$-bidegreed graph for some $k \in \N,$ such that 
\[
 d_G(v) = 
\begin{cases}
k, & \text{ if } v\in S, \\
k+1, & \text{ if } v \in V(G)\backslash S,
\end{cases}
\]
where $d_G$ is the degree of vertices of $G,$ then $\lambda_1(G_S) = \dfrac{2m}{n} + \dfrac{\sigma}{n}.$
\end{theorem}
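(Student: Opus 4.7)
The plan is to prove the bound via the Rayleigh quotient applied to the all-ones vector, and then verify the equality case by checking that this vector is in fact a Perron eigenvector in the bidegreed setting. Specifically, let $\mathbf{1}\in\R^n$ be the all-ones vector. Since $A(G_S)$ is real symmetric, I have
\[
\lambda_1(G_S)\;\geq\;\frac{\mathbf{1}^T A(G_S)\mathbf{1}}{\mathbf{1}^T\mathbf{1}}.
\]
Writing $A(G_S)=A(G)+\mathfrak{I}_S$, I would compute $\mathbf{1}^T A(G)\mathbf{1}=\sum_{i,j}a_{ij}=2m$ and $\mathbf{1}^T \mathfrak{I}_S\mathbf{1}=\sigma$, while $\mathbf{1}^T\mathbf{1}=n$. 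This immediately yields
\[
\lambda_1(G_S)\;\geq\;\frac{2m+\sigma}{n}\;=\;\frac{2m}{n}+\frac{\sigma}{n},
\]
which is the desired lower bound.

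For the equality statement, I would examine when $\mathbf{1}$ can be taken as an eigenvector associated with $\lambda_1(G_S)$. The $i$-th entry of $A(G_S)\mathbf{1}$ equals $d_G(v_i)+[v_i\in S]$. In the hypothesized $(k,k+1)$-bidegreed setup, vertices in $S$ contribute $k+1$ and vertices outside $S$ also contribute $k+1$. Hence $A(G_S)\mathbf{1}=(k+1)\mathbf{1}$, so $k+1$ is an eigenvalue of $A(G_S)$ with a strictly positive eigenvector.

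Because $G$ is connected, $A(G)$ is irreducible and therefore so is $A(G_S)=A(G)+\mathfrak{I}_S$ (adding a nonnegative diagonal preserves irreducibility of an entrywise nonnegative matrix). By the Perron--Frobenius theorem, the unique eigenvalue of $A(G_S)$ admitting a positive eigenvector is $\lambda_1(G_S)$, so $\lambda_1(G_S)=k+1$. Finally I would check the arithmetic consistency: since $2m=\sigma k+(n-\sigma)(k+1)=n(k+1)-\sigma$, one has $\tfrac{2m}{n}+\tfrac{\sigma}{n}=k+1$, matching $\lambda_1(G_S)$ and confirming that the lower bound is attained.

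I do not anticipate a serious obstacle; the only delicate point is that the equality case requires more than simply producing an eigenvector for $k+1$ (many eigenvalues exist), so one must invoke irreducibility and positivity of $\mathbf{1}$ to identify this eigenvalue with the spectral radius. The connectedness hypothesis on $G$ is used precisely here.
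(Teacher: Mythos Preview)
The paper does not prove this statement; it is quoted verbatim from \cite[Theorem~5.2]{akbari2023selfloop} and used as a tool. Your argument is correct and is the natural one: the Rayleigh-quotient inequality with the all-ones vector gives the bound, and in the bidegreed case the row sums of $A(G_S)$ are all $k+1$, so $\mathbf{1}$ is a positive eigenvector; connectedness of $G$ makes $A(G_S)$ irreducible, and Perron--Frobenius then forces $\lambda_1(G_S)=k+1=(2m+\sigma)/n$. Your remark that connectedness is needed precisely to identify $k+1$ with the spectral radius (rather than some other eigenvalue) is exactly right.
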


\begin{theorem}\label{bound}
Let $G$ be a graph of order $n$ $(n\geq 2)$ and $S\subseteq V(G), |S|=\sigma$. Then,
$$\mathcal{E}(G_S)\geq 2\lambda_1(G_S)-\frac{2\sigma}{n}.$$
The equality is attained if $G_S=K_n^{\sigma}$, where $K_n^{\sigma}$ is a graph obtained by attaching a loop at each of $\sigma$ $(0\leq\sigma\leq n)$ chosen vertices of the complete graph $K_n$.
\end{theorem}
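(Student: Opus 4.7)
The plan is to exploit the trace identity $\sum_{i=1}^n \lambda_i(G_S) = \sigma$ from Lemma~\ref{lem1}(i). After the centering $\mu_i := \lambda_i(G_S) - \sigma/n$, this becomes $\sum_{i=1}^n \mu_i = 0$, and the self-loop energy in (\ref{eq:energy2}) reads $\mathcal{E}(G_S) = \sum_{i=1}^n |\mu_i|$.

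The key step is a standard balance trick. Let $P = \{i : \mu_i > 0\}$ and $N = \{i : \mu_i \leq 0\}$. Since the $\mu_i$'s sum to zero, $\sum_{i \in P} \mu_i = -\sum_{i \in N} \mu_i$, hence
$$\mathcal{E}(G_S) = \sum_{i \in P} \mu_i - \sum_{i \in N} \mu_i = 2\sum_{i \in P}\mu_i.$$
Because $\mu_1$ is the largest of the $\mu_i$'s and they have vanishing sum, $\mu_1 \geq 0$. If $\mu_1 > 0$ then $1 \in P$ and consequently $\sum_{i\in P}\mu_i \geq \mu_1$; if $\mu_1 = 0$, then all $\mu_i$ vanish and the bound is trivially an equality. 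Either way,
$$\mathcal{E}(G_S) \geq 2\mu_1 = 2\lambda_1(G_S) - \frac{2\sigma}{n},$$
which is the desired inequality.

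For the equality clause, I would compute the spectrum of $K_n^\sigma$ directly. Ordering vertices so that $S = \{1,\dots,\sigma\}$, the eigenvalue equation $Ax = \lambda x$ gives $s = \lambda x_i$ for $i \in S$ and $s = (\lambda+1) x_i$ for $i \notin S$, where $s = \sum_j x_j$. If $s = 0$, one obtains eigenvalue $0$ with multiplicity $\sigma - 1$ (eigenvectors supported on $S$ with vanishing sum) and eigenvalue $-1$ with multiplicity $n - \sigma - 1$ (eigenvectors supported on $V\setminus S$ with vanishing sum). If $s \neq 0$, then $x$ is constant on each of $S$ and $V\setminus S$, and substitution yields the quadratic $\lambda^2 - (n-1)\lambda - \sigma = 0$, whose roots are $\frac{n-1 \pm \sqrt{(n-1)^2 + 4\sigma}}{2}$. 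Only the larger root exceeds $\sigma/n$ (for $n \geq 2$); the smaller root is non-positive, and the bulk eigenvalues $0$ and $-1$ are also at most $\sigma/n$. Hence $\mu_i \leq 0$ for every $i \geq 2$, which converts $\sum_{i \in P}\mu_i \geq \mu_1$ into an equality and verifies the equality case.

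The argument is routine overall; no step presents a genuine obstacle. The only detail requiring attention is the multiplicity bookkeeping for $\spect(K_n^\sigma)$: one must carefully separate the $s=0$ subcases, in which the constant eigenvector must vanish on $S$ (forcing $\lambda = -1$) or on $V\setminus S$ (forcing $\lambda = 0$), to arrive at the correct totals $\sigma-1$ and $n-\sigma-1$.
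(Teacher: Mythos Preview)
Your proof is correct and follows essentially the same approach as the paper's: center the eigenvalues by $\sigma/n$, use the vanishing trace to write $\mathcal{E}(G_S)$ as twice the sum of the positive $\mu_i$'s, and bound below by $2\mu_1$. The only differences are presentational---the paper cites Theorem~\ref{thm5.2} to obtain $\lambda_1(G_S)\geq\sigma/n$, whereas you deduce $\mu_1\geq 0$ directly from the zero-sum condition (which is actually cleaner, since Theorem~\ref{thm5.2} assumes connectedness), and the paper invokes Theorem~\ref{KnSchar} for $\spect(K_n^\sigma)$ where you recompute it by hand.
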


\begin{proof}
Let 
$\lambda_1(G_S)\geq\lambda_2(G_S)\geq\cdots\geq\lambda_n(G_S)$ be the eigenvalues of $G_S$, and let $k$ $(1\leq k\leq n)$ be the greatest integer such that $\lambda_k(G_S)\geq \frac{\sigma}{n}$. Since $\sum\limits_{i=1}^{n} \lambda_i(G_S)=\sigma$, we have $\sum\limits_{i=1}^{n} \left(\lambda_i(G_S)-\frac{\sigma}{n}\right)=0$. So,
$$\mathcal{E}(G_S)=\sum\limits_{i=1}^{n} \left|\lambda_i(G_S)-\frac{\sigma}{n}\right|=2\,\sum\limits_{i=1}^{k} \left(\lambda_i(G_S)-\frac{\sigma}{n}\right)\geq 2\,\left(\lambda_1(G_S)-\frac{\sigma}{n}\right),$$ since according to Theorem~\ref{thm5.2}, 
it holds that $\lambda_1(G_S)\geq\frac{\sigma}{n}$.
The second part of the statement can be verified by the direct computation using the results of   Theorem~\ref{KnSchar} in the next section.
\end{proof}


Using Theorems \ref{thm5.2} and \ref{bound},
it follows:

\begin{corollary}\label{corollary1}
Let $G$ be a connected graph of order $n$ $(n\geq 2)$ and size $m$, and let  $S\subseteq V(G)$ and $|S|=\sigma$ $(0\leq\sigma\leq n)$. Then
$$\mathcal{E}(G_S)\geq \frac{4m}{n}.$$
\end{corollary}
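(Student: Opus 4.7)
The plan is to chain together the two preceding results, Theorems \ref{thm5.2} and \ref{bound}, in a single inequality. The key observation is that the term $\tfrac{2\sigma}{n}$ that appears with a minus sign in the bound from Theorem~\ref{bound} is exactly what gets created with a plus sign when the lower bound on $\lambda_1(G_S)$ from Theorem~\ref{thm5.2} is doubled, so these contributions cancel.

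Concretely, I would start from Theorem~\ref{bound}, which gives
\[
\mathcal{E}(G_S)\ \geq\ 2\lambda_1(G_S)-\frac{2\sigma}{n},
\]
and substitute the lower bound $\lambda_1(G_S)\geq \tfrac{2m}{n}+\tfrac{\sigma}{n}$ supplied by Theorem~\ref{thm5.2}, which is applicable because $G$ is connected. After multiplying by $2$ and subtracting $\tfrac{2\sigma}{n}$, the $\sigma$-terms cancel and the stated bound $\mathcal{E}(G_S)\geq \tfrac{4m}{n}$ drops out immediately.

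There is essentially no obstacle: the argument is a one-line chain of inequalities and does not require any case analysis on $\sigma$, since Theorem~\ref{thm5.2} already handles $0\leq \sigma\leq n$ under the connectedness hypothesis, and Theorem~\ref{bound} is stated for arbitrary $S\subseteq V(G)$ with $n\geq 2$. The only thing to verify in passing is that the direction of the inequality is preserved throughout (all coefficients being positive), and that connectedness of $G$ is carried over from the hypothesis into the invocation of Theorem~\ref{thm5.2}. No further structural information about $G$ or $S$ is needed for the bound itself.
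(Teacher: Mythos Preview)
Your argument is correct and is exactly the one the paper intends: the corollary is stated as an immediate consequence of Theorems~\ref{thm5.2} and~\ref{bound}, and your one-line chaining of these two inequalities (with the $\tfrac{2\sigma}{n}$ terms cancelling) is precisely how the deduction goes.
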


Now, let us recall the statements which we will use in the proof of Corollary \ref{corollary2}.

\begin{theorem}\label{p1}
\cite[Theorem 3.2.1]{cvetkovic2010intro}
Let $\lambda_1(G)$ be the index of a graph $G$, and let $\overline{d}$ be its average degree. Then $\overline{d}\leq \lambda_1(G)\leq \Delta$. Moreover, $\overline{d}=\lambda_1(G)$ if and only if $G$ is regular. For a connected graph $G$, $\lambda_1(G)=\Delta$ if and only if $G$ is regular.
\end{theorem}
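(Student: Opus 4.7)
The plan is to treat the two inequalities separately: the lower bound $\overline{d}\leq\lambda_1(G)$ via the Rayleigh quotient, and the upper bound $\lambda_1(G)\leq\Delta$ via an entrywise inspection of a Perron-type eigenvector.

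For the lower bound, I would invoke the Rayleigh--Ritz characterization
\[
\lambda_1(G)=\max_{x\neq 0}\frac{x^TA(G)x}{x^Tx}
\]
and test it on the all-ones vector $\mathbf{1}\in\R^n$. Since $\mathbf{1}^TA(G)\mathbf{1}=\sum_{i,j}a_{ij}=2m$ and $\mathbf{1}^T\mathbf{1}=n$, we immediately get $\lambda_1(G)\geq 2m/n=\overline{d}$. Equality in Rayleigh--Ritz forces $\mathbf{1}$ to be an eigenvector of $A(G)$ associated with $\lambda_1$, i.e.\ $A(G)\mathbf{1}=\lambda_1(G)\mathbf{1}$, which is equivalent to every row sum of $A(G)$ being equal, and hence to $G$ being regular. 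Conversely, regularity of degree $r$ makes $\mathbf{1}$ an eigenvector with eigenvalue $r=\overline{d}$, and since $\mathbf{1}>0$ componentwise, Perron--Frobenius (applied on each connected component) identifies this eigenvalue with $\lambda_1(G)$.

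For the upper bound, let $x=(x_1,\dots,x_n)^T$ be a real eigenvector for $\lambda_1(G)$, and pick an index $k$ with $|x_k|=\max_i|x_i|>0$. Reading off the $k$-th coordinate of $A(G)x=\lambda_1(G)x$ and applying the triangle inequality yields
\[
|\lambda_1(G)||x_k|=\Bigl|\sum_{j\sim k}x_j\Bigr|\leq\sum_{j\sim k}|x_j|\leq d_G(v_k)|x_k|\leq\Delta|x_k|,
\]
which (using $\lambda_1(G)\geq 0$) gives $\lambda_1(G)\leq\Delta$. The delicate step is the equality characterization under connectedness: equality forces $d_G(v_k)=\Delta$ and $|x_j|=|x_k|$ for every neighbor $j$ of $k$. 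I would then let $W=\{i:|x_i|=|x_k|\}$, observe that the argument just given applies at every $i\in W$, so $W$ is closed under taking neighbors and every vertex in $W$ has degree $\Delta$; by connectedness $W=V(G)$, and $G$ is $\Delta$-regular. The main obstacle is precisely this propagation argument, and care must be taken to handle the possible sign alternation of $x$ (which in fact cannot occur here because equality in the first inequality forces all $x_j$ for $j\sim k$ to have the same sign as $x_k$). The converse is immediate: if $G$ is $\Delta$-regular then $\overline{d}=\Delta$, and the lower bound already established forces $\lambda_1(G)=\Delta$.
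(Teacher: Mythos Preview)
Your proof is correct and follows the standard textbook route (Rayleigh quotient for the lower bound, the maximum-coordinate trick for the upper bound, with a propagation argument for the equality case). However, there is nothing to compare against: the paper does not prove this statement at all. It is quoted verbatim as \cite[Theorem~3.2.1]{cvetkovic2010intro} and used as a black box (in fact it is only invoked implicitly, in the discussion preceding Corollary~\ref{corollary2}). So your write-up stands on its own as a self-contained proof of a result the authors simply import from the literature.

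One minor remark on your equality analysis for the lower bound: your appeal to Perron--Frobenius to certify that the eigenvalue $r$ attached to $\mathbf{1}$ is actually $\lambda_1(G)$ is fine, but it is slightly circular in spirit since you have not yet established the upper bound at that point in the argument. A cleaner ordering is to prove $\lambda_1(G)\leq\Delta$ first; then for $r$-regular $G$ you get $r=\overline{d}\leq\lambda_1(G)\leq\Delta=r$ directly, without Perron--Frobenius. This is purely cosmetic and does not affect correctness.
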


\begin{theorem}\label{p2}\cite[Theorem 6.7]{cvetkovic1995spectra}
A graph has exactly one positive eigenvalue if and only if its non-isolated vertices form a complete multipartite graph.
\end{theorem}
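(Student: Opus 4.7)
The plan is to prove both directions of the equivalence, noting first that isolated vertices contribute only the eigenvalue $0$ and thus do not affect the number of positive eigenvalues; accordingly, I assume throughout that $G$ has no isolated vertices.

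For the forward direction, suppose $G=K_{n_1,\dots,n_k}$ is complete multipartite with $k\geq 2$. I would work with the equitable partition of $V(G)$ into its $k$ color classes. The corresponding quotient matrix $B\in\R^{k\times k}$ has $B_{ii}=0$ and $B_{ij}=n_j$ for $i\neq j$, and diagonal conjugation by $D=\mathrm{diag}(\sqrt{n_1},\dots,\sqrt{n_k})$ transforms it into the symmetric matrix
\[
\widetilde{B}=DBD^{-1}=\mathbf{s}\mathbf{s}^{T}-\mathrm{diag}(n_1,\dots,n_k),\qquad \mathbf{s}=(\sqrt{n_1},\dots,\sqrt{n_k})^{T},
\]
which is a rank-one positive semidefinite perturbation of a negative definite diagonal matrix. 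Cauchy interlacing then yields at most one positive eigenvalue of $\widetilde{B}$, and the identity $\tr(\widetilde{B})=0$ combined with $k\geq 2$ forces exactly one positive eigenvalue. The remaining $n-k$ eigenvalues of $A(G)$ are zero, contributed by vectors supported on a single color class that sum to zero on that class. Hence $A(G)$ has exactly one positive eigenvalue.

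For the converse, assume $G$ has exactly one positive eigenvalue and no isolated vertices. I would show that $G$ contains no induced copy of $K_1\cup K_2$, which is equivalent to the non-adjacency relation on $V(G)$ being transitive, and hence to $G$ being complete multipartite. Supposing otherwise, let $\{u,v,w\}$ induce such a subgraph with $u\not\sim v$, $u\not\sim w$, $v\sim w$, and choose a neighbor $x$ of $u$ (possible since $u$ is not isolated); necessarily $x\notin\{v,w\}$. The induced subgraph $H$ on $\{u,v,w,x\}$ falls into one of three cases according to the adjacencies of $x$ with $\{v,w\}$. If $x$ is adjacent to neither, then $H=2K_2$, with spectrum $\{1,1,-1,-1\}$. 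If $x$ is adjacent to exactly one, then $H=P_4$, whose spectrum $\{2\cos(k\pi/5):k=1,\dots,4\}$ has two positive entries. If $x$ is adjacent to both, then $H$ is the paw graph, whose adjacency matrix satisfies $\tr(A(H))=0$ and $\det(A(H))=1$; its four real eigenvalues are therefore nonzero with positive product (an even number of negatives) and zero sum (ruling out zero or four negatives), leaving exactly two positive and two negative values. In all three cases $H$ has two positive eigenvalues, so by the Interlacing Theorem (Theorem~\ref{interlacing}), $G$ itself has at least two positive eigenvalues, contradicting the hypothesis.

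The main obstacle is the paw case in the converse direction. Whereas $2K_2$ and $P_4$ have well-known spectra, the paw's two positive eigenvalues must be certified with some care, and I plan to avoid extracting roots of its characteristic polynomial $\lambda^{4}-4\lambda^{2}-2\lambda+1$ directly by exploiting trace and determinant alone. The forward direction is comparatively routine once the equitable partition and the symmetrization $DBD^{-1}$ are identified.
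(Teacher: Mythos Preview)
The paper does not prove this theorem at all; it is quoted as a background result from Cvetkovi\'c--Doob--Sachs and used only to justify Corollary~\ref{corollary2}. There is therefore nothing to compare your argument against.

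That said, your proof is correct in both directions. The forward direction via the symmetrized quotient matrix $\widetilde B=\mathbf s\mathbf s^{T}-\mathrm{diag}(n_1,\dots,n_k)$ is clean; since $-\mathrm{diag}(n_1,\dots,n_k)$ is negative definite and $\mathbf s\mathbf s^{T}$ has rank one, the interlacing inequalities for a rank-one update give $\lambda_2(\widetilde B)\le -\min_i n_i<0$, and $\tr\widetilde B=0$ with $\widetilde B\neq 0$ forces exactly one positive eigenvalue. (A wording point: the tool here is really Weyl's inequality, or the rank-one interlacing theorem, rather than Cauchy interlacing for principal submatrices.) For the converse, the reduction to ``no induced $K_1\cup K_2$'' is the standard characterization of complete multipartite graphs, and your three-case analysis on $\{u,v,w,x\}$ is complete; the trace--determinant argument for the paw is a tidy way to avoid solving $\lambda^4-4\lambda^2-2\lambda+1=0$ explicitly, and the Interlacing Theorem (Theorem~\ref{interlacing}) then gives $\lambda_2(G)\ge\lambda_2(H)>0$, yielding the contradiction.
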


\begin{corollary}\label{corollary2}
Let $G$ be a regular complete multipartite graph. Then, for every $S \subseteq V(G),$ $\mathcal{E}(G_S)\geq \mathcal{E}(G)$.
\end{corollary}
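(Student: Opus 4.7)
The plan is to reduce the statement to Corollary~\ref{corollary1} by computing $\mathcal{E}(G)$ exactly. Since $G$ is regular and complete multipartite, it equals $K_{k\times p}$ for some $k\geq 2$ and $p\geq 1$, so in particular $G$ is connected of order $n=kp$ and degree $\Delta=(k-1)p$.

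I would first invoke Theorem~\ref{p2}, which guarantees that the complete multipartite graph $G$ has exactly one positive eigenvalue. Combined with regularity, Theorem~\ref{p1} gives $\lambda_1(G)=\Delta$. Since $\lambda_i(G)\leq 0$ for $i\geq 2$ and the trace identity $\sum_{i=1}^{n}\lambda_i(G)=0$ forces $\sum_{i\geq 2}\lambda_i(G)=-\Delta$, I obtain
\[
\mathcal{E}(G)=|\lambda_1(G)|+\sum_{i\geq 2}|\lambda_i(G)|=\Delta+\Delta=2\Delta.
\]
Using the handshake identity $2m=n\Delta$ available for regular graphs, this simplifies to $\mathcal{E}(G)=\frac{4m}{n}$.

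Finally, since $G$ is connected and of order $n\geq 2$, Corollary~\ref{corollary1} applies to any $S\subseteq V(G)$ and yields
\[
\mathcal{E}(G_S)\geq \frac{4m}{n}=\mathcal{E}(G),
\]
which is the claim. The proof is essentially a clean consequence of the classification of graphs with a single positive eigenvalue (Theorem~\ref{p2}) together with the already-established lower bound $\mathcal{E}(G_S)\geq 4m/n$; there is no serious technical obstacle, only the short spectral computation showing that $\mathcal{E}(G)$ saturates this bound.
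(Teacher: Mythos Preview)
Your proof is correct and follows essentially the same route as the paper: both arguments compute $\mathcal{E}(G)=2\lambda_1(G)=\tfrac{4m}{n}$ for the regular complete multipartite graph and then invoke Corollary~\ref{corollary1}. The only cosmetic difference is that the paper quotes the explicit spectrum of $K_{k\times p}$ from~\cite{cvetkovic1995spectra}, whereas you derive $\mathcal{E}(G)=2\lambda_1(G)$ from Theorems~\ref{p1} and~\ref{p2} together with the trace identity; your derivation is in fact the one the paper announces it will use when it recalls those two theorems just before the corollary.
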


\begin{proof}
If we suppose that $G$ has $k$ classes of $p$ vertices, i.e. the order $n$ of $G$ is $n=k\,p$, then the spectrum of $G$ is (see \cite{cvetkovic1995spectra}, p.73): 
${\rm Spec}(G)=\left( \begin{array}{ccc}
 n-p & 0 & -p \\
1 & n-k & k-1 \\
\end{array}
 \right).$
So we find $\mathcal{E}(G)=2\lambda_1(G)=2(n-p)$. Since $G$ is an $(n-p)$-regular, $|E(G)|=\frac{n(n-p)}{2}$, which means $\mathcal{E}(G)=\frac{4}{n}|E(G)|$. Therefore, the proof follows from Corollary \ref{corollary1}.
\end{proof}

Next, we provide an analogous result to Theorem~\ref{bound}. Consequently, the regularity condition in Corollary~\ref{corollary2} can be relaxed under certain conditions.

\begin{theorem}
Let $G$ be a graph of order $n \geq 4.$ Let $S\subset V(G)$, where $|S|=\sigma$ and $1\leq\sigma\leq\frac{n}{2}$. If the induced subgraph on $S$ is not a complete graph, then
\[
\mathcal{E}(G_S) \geq 2 \lambda_1(G_S).
\]
\end{theorem}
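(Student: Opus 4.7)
The plan is to reformulate $\mathcal{E}(G_S)\geq 2\lambda_1(G_S)$ as a lower bound on $\lambda_2(G_S)$, and to supply that bound by a single application of the Interlacing Theorem to a $2\times 2$ principal submatrix of $A(G_S)$ singled out by the hypotheses.

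First I would set $\mu_i:=\lambda_i(G_S)-\sigma/n$, so that Lemma~\ref{lem1}(i) gives $\sum_{i=1}^n\mu_i=0$ and hence
\[
\mathcal{E}(G_S)\;=\;\sum_{i=1}^n |\mu_i|\;=\;2\sum_{i:\,\mu_i>0}\mu_i.
\]
Because $\lambda_1(G_S)$ is at least the average value $\sigma/n$, one has $\mu_1\geq 0$; therefore the target inequality reduces to showing $\mu_2\geq \sigma/n$, since retaining only the $i=1,2$ terms of the above sum would yield
\[
\mathcal{E}(G_S)\;\geq\;2(\mu_1+\mu_2)\;\geq\; 2\bigl(\lambda_1(G_S)-\tfrac{\sigma}{n}\bigr)+2\cdot\tfrac{\sigma}{n}\;=\;2\lambda_1(G_S).
\]

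The crux is therefore to prove $\lambda_2(G_S)\geq 2\sigma/n$. Here I would use the hypothesis that the subgraph of $G$ induced on $S$ is not complete to select two distinct vertices $u,v\in S$ that are non-adjacent in $G$. Since both $u$ and $v$ carry loops while $A(G_S)_{uv}=0$, the principal submatrix of $A(G_S)$ indexed by $\{u,v\}$ is exactly $I_2$, whose eigenvalues are $1,1$. Cauchy interlacing, applied via Theorem~\ref{interlacing} to the symmetric matrix $A(G_S)$, then yields $\lambda_2(G_S)\geq 1$. The hypothesis $\sigma\leq n/2$ converts this into $\lambda_2(G_S)\geq 1\geq 2\sigma/n$, and hence $\mu_2\geq 1-\sigma/n\geq \sigma/n$, closing the chain.

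The only delicate point is the interplay of the two structural hypotheses: non-completeness of $G[S]$ is precisely what produces the $I_2$ block and hence the bound $\lambda_2\geq 1$, while $\sigma\leq n/2$ is precisely what is needed to overcome the shift by $\sigma/n$ built into the definition of $\mathcal{E}(G_S)$. Neither hypothesis is used anywhere else in the argument, and neither can be dropped without invalidating the final estimate.
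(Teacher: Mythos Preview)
Your proof is correct and follows essentially the same route as the paper: both arguments use the non-completeness of $G[S]$ to find two non-adjacent looped vertices, apply Cauchy interlacing to the resulting $I_2$ principal submatrix to obtain $\lambda_2(G_S)\geq 1$, and then invoke $\sigma\leq n/2$ to close the estimate. The only difference is cosmetic---the paper writes the conclusion as $\mathcal{E}(G_S)\geq 2\lambda_1(G_S)+2(1-2\sigma/n)$ before dropping the nonnegative surplus, whereas you phrase the same computation as the reduction $\mu_2\geq \sigma/n$.
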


\begin{proof}
It holds $\sum^n_{i=1} \lambda_i(G_S) = \sigma,$ so 
\begin{equation}
\label{energyt2}
\cE(G_S) = 2 \sum^t_{i=1} \left( \lambda_i(G_S) - \frac{\sigma}{n} \right),
\end{equation}
where $t$ is the greatest integer such that $\lambda_i(G_S) \geq \frac{\sigma}{n}.$ Since the induced subgraph on $S$ is not a complete graph, there are two non-adjacent vertices in $S,$ i.e., $2\widehat{K_1}$ is an induced subgraph of $G_S.$ By Theorem~\ref{interlacing}, we get
\[
\lambda_i (G_S) \geq \lambda_i (2 \widehat{K_1}) = 1, \quad i=1,2.
\]
Thus, it follows from \eqref{energyt2} and $\sigma \leq \frac{n}{2}$ that
\[
\cE(G_S) \geq 2\lambda_1(G_S) + 2\left( 1- \frac{2\sigma}{n}\right) \geq 2 \lambda_1(G_S). \qedhere
\]
\end{proof}

\begin{corollary}
If $G$ is a complete multipartite graph of order $n \geq 4$ and $S \subset V(G),$ $1 \leq |S|\leq \frac{n}{2},$ and the induced subgraph on $S$ is not a complete graph, then $\mathcal{E}(G_S) \geq \mathcal{E}(G).$   
\end{corollary}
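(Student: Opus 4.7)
The plan is to reduce the corollary to a short chain of three inequalities, each already supplied by the paper.

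First, I would observe that if $G$ is a complete multipartite graph (with at least two parts, so without isolated vertices), then Theorem \ref{p2} guarantees that $G$ has exactly one positive eigenvalue, namely $\lambda_1(G)$. Combined with the trace identity $\sum_{i=1}^n \lambda_i(G) = 0$, this forces
\[
\sum_{i=2}^n |\lambda_i(G)| \;=\; -\sum_{i=2}^n \lambda_i(G) \;=\; \lambda_1(G),
\]
so that $\mathcal{E}(G) = 2\lambda_1(G)$.

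Next, the hypotheses of the corollary ($n \geq 4$, $1 \leq |S| \leq n/2$, and the induced subgraph on $S$ is not complete) match exactly those of the immediately preceding theorem, whose conclusion reads $\mathcal{E}(G_S) \geq 2\lambda_1(G_S)$. Finally, Theorem \ref{spectrum} supplies the Courant--Weyl monotonicity $\lambda_1(G_S) \geq \lambda_1(G)$. Chaining these three facts yields
\[
\mathcal{E}(G_S) \;\geq\; 2\lambda_1(G_S) \;\geq\; 2\lambda_1(G) \;=\; \mathcal{E}(G),
\]
which is exactly the desired inequality.

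The argument is essentially a one-line concatenation once the three preparatory statements are in hand, so I do not expect any serious obstacle. The only mild point to check is the degenerate case in which the complete multipartite graph has a single part, i.e.\ the empty graph on $n$ vertices; but there $\mathcal{E}(G) = 0$ and the conclusion is trivial. In the nontrivial case ($k \geq 2$ parts), every vertex has a neighbor, so Theorem \ref{p2} applies verbatim, and the proof is complete.
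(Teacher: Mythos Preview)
Your proposal is correct and follows exactly the approach implied by the paper's structure: the corollary is stated without proof immediately after the theorem giving $\mathcal{E}(G_S)\geq 2\lambda_1(G_S)$, and your chain $\mathcal{E}(G_S)\geq 2\lambda_1(G_S)\geq 2\lambda_1(G)=\mathcal{E}(G)$ via Theorem~\ref{spectrum} and Theorem~\ref{p2} is precisely the intended argument.
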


\section{
Nordhaus-Gaddum-type Bounds for Spectral Radius of Self-loop Graphs}
\label{Sec4}

In this section, we establish Nordhaus-Gaddum-type bounds, as well as some lower and upper bounds, for the spectral radius of $G_S.$ Precisely, we give a new definition of the complement $\overline{G_S}$ of $G_S$ (see Definition~\ref{complement}) and find bounds for the sum of some particular eigenvalues of $G_S$ and $\overline{G_S}$.

We will start with an upper bound for the spectral radius of $G_S.$

\begin{theorem}
Let $G_S$ be the self-loop graph of $G$ of order $n \geq 2,$ size $m \geq 1,$ and $|S|=\sigma.$ Then,
\begin{equation}
\label{eq:bound1}
\lambda_1(G_S) \leq \frac{\sigma}{n} + \sqrt{\frac{\sigma(n-1)(n-\sigma)}{n^2} + \frac{2m(n-1)}{n}}.
\end{equation}
\end{theorem}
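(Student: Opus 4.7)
The plan is to use a standard variance/moment argument combined with the Cauchy--Schwarz inequality, centering the eigenvalues at their mean $\sigma/n$. The input is the pair of identities from Lemma~\ref{lem1}: $\sum_{i=1}^{n}\lambda_i(G_S)=\sigma$ and $\sum_{i=1}^{n}\lambda_i(G_S)^2=2m+\sigma$. These say that the arithmetic mean of the eigenvalues of $G_S$ is exactly $\sigma/n$, which is precisely the centering used in the definition \eqref{eq:energy2} of $\mathcal{E}(G_S)$, and suggests studying the spread of the $\lambda_i(G_S)$ about this mean.

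First, I would compute the total squared deviation by expanding
$$\sum_{i=1}^{n}\Bigl(\lambda_i(G_S)-\tfrac{\sigma}{n}\Bigr)^{2} \;=\; \sum_{i=1}^{n}\lambda_i(G_S)^2 \;-\; \frac{2\sigma}{n}\sum_{i=1}^{n}\lambda_i(G_S) \;+\; \frac{\sigma^{2}}{n} \;=\; 2m+\sigma-\frac{\sigma^{2}}{n} \;=\; 2m+\frac{\sigma(n-\sigma)}{n},$$
using Lemma~\ref{lem1} directly. Next, I would isolate the top eigenvalue from the remaining $n-1$ eigenvalues. Since $\sum_{i=1}^{n}(\lambda_i(G_S)-\sigma/n)=0$, we have $\sum_{i=2}^{n}(\lambda_i(G_S)-\sigma/n)=-(\lambda_1(G_S)-\sigma/n)$, so the Cauchy--Schwarz inequality applied to the $n-1$ terms yields
$$\sum_{i=2}^{n}\Bigl(\lambda_i(G_S)-\tfrac{\sigma}{n}\Bigr)^{2} \;\geq\; \frac{1}{n-1}\Bigl(\sum_{i=2}^{n}\bigl(\lambda_i(G_S)-\tfrac{\sigma}{n}\bigr)\Bigr)^{2} \;=\; \frac{(\lambda_1(G_S)-\sigma/n)^{2}}{n-1}.$$
Adding the $i=1$ term to both sides and combining with the variance identity from the previous step gives
$$\frac{n}{n-1}\Bigl(\lambda_1(G_S)-\tfrac{\sigma}{n}\Bigr)^{2} \;\leq\; 2m+\frac{\sigma(n-\sigma)}{n},$$
and \eqref{eq:bound1} then follows by clearing denominators and taking the positive square root.

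I do not anticipate any real obstacle: the argument is the standard derivation of Hong-type bounds applied to the self-loop setting, and it specializes correctly at the endpoints (at $\sigma=0$ one recovers $\lambda_1(G)\leq\sqrt{2m(n-1)/n}$, and at $\sigma=n$ one obtains a bound compatible with Theorem~\ref{spectrum}). The only mild subtlety is justifying that the square root in the final step produces a valid upper bound, i.e.\ that $\lambda_1(G_S)\geq\sigma/n$ so that $\lambda_1(G_S)-\sigma/n$ is the nonnegative root of its square; but this is immediate, since the largest element of a finite multiset is never smaller than its arithmetic mean, which by Lemma~\ref{lem1}(i) is exactly $\sigma/n$.
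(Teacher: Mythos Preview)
Your proof is correct and follows essentially the same approach as the paper: both use the moment identities from Lemma~\ref{lem1} together with Cauchy--Schwarz applied to the $n-1$ smaller eigenvalues. The only cosmetic difference is that you center the eigenvalues at $\sigma/n$ from the outset, which yields the bound directly upon taking a square root, whereas the paper works with the uncentered quantities and reaches the same conclusion by solving the resulting quadratic inequality in $\lambda_1$.
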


\begin{proof}
In the following, for simplicity, we write $\lambda_i=\lambda_i(G_S), i=1,...,n.$
By Lemma~\ref{lem1}(i),
\begin{equation}
\label{eq:1}
\lambda_1 - \sigma = - (\lambda_2 + \cdots + \lambda_n).
\end{equation}
Thus, by Cauchy-Schwarz Inequality, we have
\begin{equation}
\label{eq:2}
    \lambda_2^2 + \cdots + \lambda_n^2 \geq \frac{(\lambda_1-\sigma)^2}{n-1}.
\end{equation}
Lemma~\ref{lem1} and equation \eqref{eq:2} imply that
\begin{align*}
2m+\sigma &= \lambda_1^2 + (\lambda^2_2+\cdots + \lambda_n^2)  \\
&\geq \lambda_1^2 +\frac{(\lambda_1-\sigma)^2}{n-1} \\
&= \frac{n\lambda_1^2-2\sigma \lambda_1 + \sigma^2}{n-1},
\end{align*}
or equivalently,
\begin{equation}
\label{eq:ineq1}
n\lambda_1^2 - 2 \sigma \lambda_1 + \sigma^2- (n-1)(2m+\sigma) \leq 0.
\end{equation}
The roots of $n\lambda_1^2 - 2 \sigma \lambda_1 + \sigma^2- (n-1)(2m+\sigma)=0$ are as follows:
\begin{align*}
x_i
&= \frac{\sigma}{n} \pm \sqrt{\frac{\sigma^2}{n^2} - \frac{\sigma^2}{n} + \frac{n-1}{n}(2m+\sigma)} \\
&= \frac{\sigma}{n} \pm \sqrt{\frac{\sigma(n-1)(n-\sigma)}{n^2} + \frac{2m(n-1)}{n}}.
\end{align*}
It follows from \eqref{eq:ineq1} that
\[
\lambda_1 \leq \frac{\sigma}{n} + \sqrt{\frac{\sigma(n-1)(n-\sigma)}{n^2} + \frac{2m(n-1)}{n}}. \qedhere
\]
\end{proof}

\begin{corollary}
When $\sigma=0,$ \eqref{eq:bound1} reduces to $\lambda_1(G) \leq \sqrt{\dfrac{2m(n-1)}{n} }$ given by Nosal \cite{nosal1970}.
\end{corollary}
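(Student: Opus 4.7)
The plan is essentially a direct substitution, since the corollary asserts a sanity check rather than a new inequality. The first step is to note that $\sigma = 0$ forces $S = \emptyset$, so by definition $G_S = G$ and therefore $A(G_S) = A(G)$, giving $\lambda_1(G_S) = \lambda_1(G)$. This handles the left-hand side of \eqref{eq:bound1}.

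The second step is to simplify the right-hand side under $\sigma = 0$. The isolated term $\frac{\sigma}{n}$ vanishes immediately. Inside the radical, the summand $\frac{\sigma(n-1)(n-\sigma)}{n^{2}}$ also vanishes, because it carries a factor of $\sigma$. Only the summand $\frac{2m(n-1)}{n}$ survives, so \eqref{eq:bound1} collapses to
\[
\lambda_1(G) \;\leq\; \sqrt{\tfrac{2m(n-1)}{n}},
\]
which is precisely the inequality attributed to Nosal \cite{nosal1970}.

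There is no genuine obstacle here: the corollary is stated to emphasize that \eqref{eq:bound1} extends Nosal's classical bound to self-loop graphs, and the verification is a one-line specialization. The only matter worth a brief remark is confirming that the argument of the theorem that produces \eqref{eq:bound1} remains valid at $\sigma = 0$; this is clear because Lemma~\ref{lem1} then reduces to the standard identities $\sum_i \lambda_i(G) = 0$ and $\sum_i \lambda_i^{2}(G) = 2m$, and the Cauchy--Schwarz step \eqref{eq:2} goes through unchanged.
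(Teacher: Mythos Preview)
Your proposal is correct and matches the paper's treatment: the paper states this corollary without proof, since it follows by direct substitution of $\sigma=0$ into \eqref{eq:bound1}. Your verification is exactly the intended specialization.
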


\begin{definition}
\label{complement}
Let $G_S$ be the self-loop graph of $G$ of order $n,$ size $m,$ and $S \subseteq V(G)$ with $|S|=\sigma.$ 
Define the complement $\overline{G_S}$ of $G_S$ to be $\overline{G}_S,$ i.e., the graph obtained from $\overline{G}$ by attaching loops at (its) vertices belonging to the set $S$.
\end{definition}

This definition of $\overline{G_S}$ satisfies the property $\overline{\overline{G_S}} = G_S.$ There is another possible definition of $\overline{G_S}$ recently considered in \cite{ShettyBhat2023}, by taking $\overline{G_S} := \overline{G}_{V \backslash S}.$ Despite the latter definition also satisfies $\overline{\overline{G_S}} = G_S,$ our definition of $\overline{G_S}$ gives a natural generalization of a Nordhaus-Gaddum-type bound for $\lambda_1(G_S)$ and $\lambda_1(\overline{G_S}),$ in the sense that we recover the classical case when $\sigma=0,$ see Remark~\ref{Nosal2}.

\begin{theorem}
\label{thm2} 
Let $G_S$ be a connected self-loop graph of order $n \geq 2,$ size $m \geq 1,$  with $|S|=\sigma,$ such that its complement $\overline{G_S}$ of size $\overline{m}$ is also connected. Let $\lambda_1$ and $\overline{\lambda}_1$ be the spectral radius of $G_S$ and $\overline{G_S}$, respectively. Then,
\begin{equation}
\label{eq:bound2}
n-1+ \frac{2\sigma}{n} \leq \lambda_1+\overline{\lambda}_1
\leq \frac{2\sigma}{n} + \sqrt{2} \sqrt{\frac{2\sigma(n-1)(n-\sigma)}{n^2} + (n-1)^2}.
\end{equation}
\end{theorem}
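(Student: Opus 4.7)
\textbf{Plan for the proof of Theorem \ref{thm2}.} The plan is to handle the two bounds separately, each reducing to a result already established earlier in the section (or in Section~\ref{Sec3}) combined with the elementary identity $m + \overline{m} = \binom{n}{2}$, valid because the ordinary-edge sets of $G$ and $\overline{G}$ partition $E(K_n)$; the loop set $S$ is shared by $G_S$ and $\overline{G_S}$ by Definition~\ref{complement} and therefore plays no role in the edge count.

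For the lower bound, I would apply Theorem~\ref{thm5.2} to both $G_S$ and $\overline{G_S}$ (which are connected by hypothesis) to get
\[
\lambda_1(G_S) \geq \frac{2m}{n} + \frac{\sigma}{n}, \qquad \lambda_1(\overline{G_S}) \geq \frac{2\overline{m}}{n} + \frac{\sigma}{n}.
\]
Adding these and substituting $m + \overline{m} = \frac{n(n-1)}{2}$ immediately yields $\lambda_1 + \overline{\lambda}_1 \geq (n-1) + \frac{2\sigma}{n}$.

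For the upper bound, I would apply inequality \eqref{eq:bound1} to both $G_S$ and $\overline{G_S}$, obtaining
\[
\lambda_1 + \overline{\lambda}_1 \leq \frac{2\sigma}{n} + \sqrt{\frac{\sigma(n-1)(n-\sigma)}{n^2} + \frac{2m(n-1)}{n}} + \sqrt{\frac{\sigma(n-1)(n-\sigma)}{n^2} + \frac{2\overline{m}(n-1)}{n}}.
\]
Then I would invoke the elementary inequality $\sqrt{a}+\sqrt{b}\leq\sqrt{2(a+b)}$ for nonnegative reals $a,b$ (a direct consequence of Cauchy--Schwarz, or equivalently of the concavity of $\sqrt{\cdot}$) to collapse the two square roots. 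Plugging in $m+\overline{m}=\frac{n(n-1)}{2}$, the $m$-dependence cancels out and the term $\frac{2(m+\overline{m})(n-1)}{n}$ becomes exactly $(n-1)^2$, which produces the stated right-hand side.

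Neither step presents a genuine obstacle, the argument is essentially a clean assembly of (i) the index lower bound in Theorem~\ref{thm5.2}, (ii) the spectral radius upper bound \eqref{eq:bound1}, and (iii) the Cauchy--Schwarz-type concavity inequality for $\sqrt{\cdot}$; the mildly delicate point is to observe that the loop terms $\frac{\sigma}{n}$ and $\frac{\sigma(n-1)(n-\sigma)}{n^2}$ appear \emph{identically} in the bounds for $G_S$ and $\overline{G_S}$, which is precisely why Definition~\ref{complement} (rather than the alternative $\overline{G}_{V\setminus S}$) is the right notion of complement to recover the classical Nordhaus--Gaddum form and, when $\sigma=0$, the bound of Nosal.
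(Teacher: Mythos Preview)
Your proposal is correct and matches the paper's proof essentially step for step: the lower bound via Theorem~\ref{thm5.2} applied to $G_S$ and $\overline{G_S}$ plus $m+\overline{m}=\binom{n}{2}$, and the upper bound via \eqref{eq:bound1} on both graphs followed by $\sqrt{x}+\sqrt{y}\le\sqrt{2(x+y)}$ and the same edge-count identity. Your remark about why Definition~\ref{complement} makes the $\sigma$-terms line up identically is also in agreement with the paper's discussion.
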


\begin{proof}
Recall that
\begin{equation}
\label{eq:property1}
m+\overline{m}= \frac{n(n-1)}{2}.
\end{equation}
By Theorem~\ref{thm5.2}, 
we have $\lambda_1 \geq \dfrac{2m}{n}+ \dfrac{\sigma}{n},$ thus
\begin{equation}
\lambda_1 + \overline{\lambda}_1\geq \frac{2(m+\overline{m})}{n}+ \frac{2\sigma}{n}= n-1+ \frac{2\sigma}{n}.
\end{equation}
This gives the lower bound of \eqref{eq:bound2}.

Next, we show the upper bound of \eqref{eq:bound2}.
Let
\begin{align*}
x&= \frac{\sigma(n-1)(n-\sigma)}{n^2} + \frac{2m(n-1)}{n}, \\
y&= \frac{\sigma(n-1)(n-\sigma)}{n^2} + \frac{2\overline{m}(n-1)}{n}.
\end{align*}
Then, by the property \eqref{eq:property1} again,
\begin{equation}
\label{eq:ineq2}
x+y
= \frac{2\sigma(n-1)(n-\sigma)}{n^2} + (n-1)^2.
\end{equation}
Obviously, we have
\begin{equation}
\label{eq:ineq3}
\sqrt{x} + \sqrt{y}
\leq \sqrt{2}\sqrt{x+y}.
\end{equation}
Thus, combining \eqref{eq:bound1},\eqref{eq:ineq2},\eqref{eq:ineq3}, we obtain
\begin{align*}
\lambda_1 + \overline{\lambda}_1
&\leq \frac{2\sigma}{n} + \sqrt{2}\sqrt{x+y} \\
&= \frac{2\sigma}{n} + \sqrt{2} \sqrt{\frac{2\sigma(n-1)(n-\sigma)}{n^2} + (n-1)^2}.
\end{align*}
This completes the proof.
\end{proof}


\begin{remark}\label{Nosal2}
When $\sigma=0,$ \eqref{eq:bound2} reduces to the classical Nordhaus-Gaddum-type bound for the spectral radius of a simple graph $G$ of order $n,$ given by Nosal \cite{nosal1970}
\begin{equation}
\label{eq:bound33}
n-1 \leq \lambda_1 + \overline{\lambda}_1 \leq \sqrt{2}(n-1).
\end{equation}
\end{remark}

Now, we will expose the upper bound for $\lambda_1+\overline{\lambda_1}$ in terms of $\Delta(G)$ and $\delta(G).$

\begin{theorem}
Let $G$ be a graph of order $n$ and $\lambda_1$ and $\overline{\lambda}_1$ be the spectral radius of $G_S$ and $\overline{G_S}$, respectively. Then, we have 
\begin{equation}
\label{eq:bound3}
\lambda_1 + \bar{\lambda}_1 \leq n+1 + (\Delta(G) - \delta(G)).
\end{equation}
\end{theorem}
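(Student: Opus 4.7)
The plan is to reduce the bound on $\lambda_1+\overline{\lambda}_1$ to the classical degree bound on the spectral radius of a simple graph, using Theorem~\ref{spectrum} (the Courant--Weyl interlacing between $G$ and $G_S$) twice and the elementary relationship between the degree sequences of $G$ and $\overline{G}$.

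First, I would apply Theorem~\ref{spectrum} to $G_S$ to get $\lambda_1(G_S)\leq \lambda_1(G)+1$. Since $\overline{G_S}$ is by Definition~\ref{complement} the graph $\overline{G}_S$ obtained by attaching loops at the same vertex set $S\subseteq V(\overline{G})=V(G)$, the same theorem applied to $\overline{G}$ in place of $G$ yields $\overline{\lambda}_1=\lambda_1(\overline{G_S})\leq \lambda_1(\overline{G})+1$. Adding these two inequalities gives
\[
\lambda_1+\overline{\lambda}_1 \leq \lambda_1(G)+\lambda_1(\overline{G})+2.
\]

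Next, I would invoke the standard fact (recalled in Remark following Theorem~\ref{spectrum}) that $\lambda_1(H)\leq \Delta(H)$ for any simple graph $H$, applied to both $G$ and $\overline{G}$. Since the degree of any vertex $v$ in $\overline{G}$ is $n-1-d_G(v)$, we have $\Delta(\overline{G})=n-1-\delta(G)$. Consequently,
\[
\lambda_1(G)+\lambda_1(\overline{G})\leq \Delta(G)+(n-1-\delta(G))=n-1+(\Delta(G)-\delta(G)).
\]

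Combining the two displayed inequalities immediately yields
\[
\lambda_1+\overline{\lambda}_1 \leq n+1 + (\Delta(G)-\delta(G)),
\]
as desired. No real obstacle arises, since each step is either Theorem~\ref{spectrum} or a textbook fact; the only point requiring a line of care is the identification $V(\overline{G})=V(G)$ so that the same set $S$ (and hence the same perturbation $\mathfrak{I}_S$) governs the passage to loops on both sides of the complement.
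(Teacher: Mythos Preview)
Your proof is correct and follows essentially the same route as the paper's: both bound $\lambda_1(G_S)$ by $\Delta(G)+1$ and $\lambda_1(\overline{G_S})$ by $(n-1-\delta(G))+1$ via Theorem~\ref{spectrum} together with $\lambda_1(H)\le\Delta(H)$, then add. The only difference is cosmetic—you split the two ingredients into separate displayed inequalities, whereas the paper invokes the combined bound $\lambda_1(G_S)\le\Delta(G)+1$ (already noted in the Remark after Theorem~\ref{spectrum}) directly.
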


\begin{proof}
By Theorem~\ref{spectrum}, $\lambda_1 \leq \Delta(G) +1,$ $\bar{\lambda}_1 \leq (n-1 - \delta(G)) +1.$ Thus, we find \eqref{eq:bound3}.
\end{proof}

\begin{remark}
The upper bound of \eqref{eq:bound3} improves the upper bound of \eqref{eq:bound33} for sufficiently large $n$ and small $\kappa:= \Delta(G)-\delta(G).$ More precisely, $n+1+\kappa < \sqrt{2}(n-1)$ when
\[ n > 4\kappa + 9. \]
\end{remark}

The next result asserts a lower bound of the spectral radius of $G_S$ in terms of the first Zagreb index $M_1(G)$ and the minimum degree $\delta(G)$. Before that, let us recall the following theorem.

\begin{theorem}\cite[Theorem 3.2]{Zhou2000spectral}
\label{Zhou}
Let $A=(a_{ij})$ be an $n \times n$ non-negative symmetric matrix with positive row sums $r_1,r_2,\ldots, r_n.$ Then, $\lambda_1(A) \geq \sqrt{\sum^n_{i=1} {r_i}^2/n}$ with equality if and only if $A$ is regular or semiregular.
\end{theorem}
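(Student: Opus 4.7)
The plan is to apply the Rayleigh quotient to $A^2$ with the all-ones vector, leveraging the Perron--Frobenius structure of nonnegative symmetric matrices.

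First I would observe that, because $A$ is real symmetric, the eigenvalues of $A^2$ are exactly $\lambda_1(A)^2,\ldots,\lambda_n(A)^2$. Since $A$ is nonnegative, Perron--Frobenius gives $\lambda_1(A) \geq |\lambda_i(A)|$ for every $i$, so $\lambda_1(A)^2$ is the largest eigenvalue of $A^2$; in particular $\lambda_1(A^2) = \lambda_1(A)^2$. Letting $r = A\mathbf{1}$ denote the vector of row sums, the Rayleigh quotient bound applied to $\mathbf{1}$ yields
\[
\lambda_1(A)^2 = \lambda_1(A^2) \;\geq\; \frac{\mathbf{1}^{T} A^2 \mathbf{1}}{\mathbf{1}^{T}\mathbf{1}} \;=\; \frac{\|A\mathbf{1}\|^2}{n} \;=\; \frac{1}{n}\sum_{i=1}^{n} r_i^2,
\]
and taking square roots produces the desired inequality.

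For the equality analysis, equality in the Rayleigh bound forces $\mathbf{1}$ to lie in the $\lambda_1(A)^2$-eigenspace of $A^2$, i.e.\ $A^2 \mathbf{1} = \lambda_1(A)^2 \mathbf{1}$, equivalently $Ar = \lambda_1(A)^2\mathbf{1}$. I would then split into two cases. If $\mathbf{1}$ is itself an eigenvector of $A$, then $A\mathbf{1} = c\mathbf{1}$ says every row sum equals $c$, so $A$ is row-regular; symmetry then forces column-regularity, and $A$ is regular. Otherwise $\mathrm{span}(\mathbf{1},r)$ is a two-dimensional $A$-invariant subspace on which $A$ acts by $\bigl(\begin{smallmatrix}0 & \lambda_1^{2}\\ 1 & 0\end{smallmatrix}\bigr)$, yielding eigenvalues $\pm \lambda_1(A)$. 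Hence $-\lambda_1(A)$ is an eigenvalue of $A$.

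The remaining step, which I expect to be the main obstacle, is to convert the presence of both $\pm\lambda_1(A)$ in the spectrum into the semiregular block form $P^{T}AP=\bigl(\begin{smallmatrix}0&B\\C&0\end{smallmatrix}\bigr)$. Assuming first that $A$ is irreducible, a standard Perron--Frobenius argument (partition indices by the sign pattern of an eigenvector for $-\lambda_1(A)$ relative to the strictly positive Perron eigenvector) shows that $A$ is permutation-similar to $\bigl(\begin{smallmatrix}0 & B\\ B^{T} & 0\end{smallmatrix}\bigr)$. Now decomposing $\mathbf{1}$ in the basis of the Perron eigenvector $v_1=(u,v)^{T}$ and the $-\lambda_1$-eigenvector $v_2=(u,-v)^{T}$ shows that $u$ and $v$ must each be constant vectors, which forces $B\mathbf{1}$ and $B^{T}\mathbf{1}$ to be constant, i.e.\ $B$ is both row- and column-regular. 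Hence $A$ is semiregular in the sense defined in the paper. The reducible case is handled by first decomposing $A$ into its irreducible diagonal blocks and observing that equality in the Rayleigh bound forces each block to saturate its own bound, reducing to the irreducible case. Finally the converse (regular or semiregular $\Rightarrow$ equality) is a direct computation: in the regular case $\mathbf{1}$ is itself the Perron eigenvector, and in the semiregular case the identity $p_1 a = p_2 b$ for block sizes and constant row sums gives $\sum r_i^2/n = ab = \lambda_1(A)^2$.
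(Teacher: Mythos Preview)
The paper does not supply a proof of this theorem at all: Theorem~\ref{Zhou} is quoted verbatim from \cite[Theorem~3.2]{Zhou2000spectral} and used as a black box in the proof of Theorem~\ref{lowerbound}. So there is no in-paper argument to compare your attempt against.

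That said, your derivation of the inequality is correct and is the standard one: apply the Rayleigh quotient to $A^2$ with the all-ones vector and use Perron--Frobenius to identify $\lambda_1(A^2)=\lambda_1(A)^2$. The irreducible equality case is also fine. The one soft spot is your treatment of the reducible equality case. You assert that equality forces every irreducible diagonal block to saturate its own bound (true) and then ``reduce to the irreducible case,'' but you do not check that a direct sum of blocks, each regular or semiregular with the common spectral radius $\lambda_1$, can always be assembled into a single matrix that is regular or semiregular in the sense defined in the paper (a single permutation putting $A$ into the form $\bigl(\begin{smallmatrix}0&B\\B^{T}&0\end{smallmatrix}\bigr)$ with $B$ both row- and column-regular). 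For instance, $A=A(K_{1,4})\oplus A(C_4)$ has $\lambda_1=2$ and row sums $4,1,1,1,1,2,2,2,2$, so $\sum r_i^2/n=36/9=4=\lambda_1^2$ and equality holds; yet no bipartition of the nine vertices makes the off-diagonal block simultaneously row- and column-regular. Whether this is a genuine gap in your argument or merely a mismatch with the precise definition used in \cite{Zhou2000spectral} depends on that source, but as written your reducible step is not justified against the definition recorded in the present paper.
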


\begin{theorem}
\label{lowerbound}
Let $G_S$ be a connected graph of order $n,$ size $m,$ and $|S|=\sigma$. Let $\lambda_1(G_S)$ be the spectral radius of $G_S$. Let $M_1(G)$ and $\delta(G)$ be the first Zagreb index and minimum degree of $G,$ respectively.
Then,
\begin{equation}
\label{eq:lowerbound2}
\lambda_1(G_S) \geq  \sqrt{\frac{M_1(G)}{n} + \frac{\sigma}{n}(2\delta(G)+1)}.
\end{equation}
In particular, when 
\begin{enumerate}[(i)]
    \item $G_S \cong \widehat{K_n},$  or, 
    \item $G$ is a $(k,k+1)$-semiregular graph such that $d_G(v)=k$ if $v\in S,$ $d_G(v)=k+1$ if $v \in V(G)\backslash S,$
\end{enumerate} 
then, the equality holds.
\end{theorem}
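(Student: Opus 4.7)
The plan is to apply Theorem~\ref{Zhou} directly to the non-negative symmetric matrix $A(G_S)=A(G)+\mathfrak{I}_S$. Connectedness of $G_S$ with $n\geq 2$ guarantees that every vertex has at least one ordinary neighbor, so all row sums are positive and Zhou's hypothesis is satisfied. The $i$-th row sum of $A(G_S)$ is $r_i=d_G(v_i)$ when $v_i\notin S$ and $r_i=d_G(v_i)+1$ when $v_i\in S$, where the extra $+1$ comes from the diagonal entry contributed by the loop at $v_i$.

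Next, I would expand $\sum_{i=1}^n r_i^2$ by splitting according to whether $v_i\in S$:
\begin{align*}
\sum_{i=1}^n r_i^2
&= \sum_{v_i\in S}(d_G(v_i)+1)^2 + \sum_{v_i\notin S}d_G(v_i)^2 \\
&= \sum_{i=1}^n d_G(v_i)^2 + 2\sum_{v_i\in S}d_G(v_i) + \sigma \\
&= M_1(G) + 2\sum_{v_i\in S}d_G(v_i) + \sigma.
\end{align*}
Using the trivial bound $d_G(v_i)\geq \delta(G)$ for every $v_i\in S$, this quantity is at least $M_1(G)+\sigma(2\delta(G)+1)$. Combining with Zhou's conclusion $\lambda_1(A(G_S))\geq \sqrt{\sum_i r_i^2/n}$ yields \eqref{eq:lowerbound2} at once.

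For the equality cases, I would verify that both (i) and (ii) simultaneously force (a) $A(G_S)$ to be regular in Zhou's sense, so that the spectral inequality is tight, and (b) $d_G(v)=\delta(G)$ for every $v\in S$, so that the degree bound is tight. In case (i), $A(\widehat{K_n})$ is the all-ones matrix $J_n$, which is obviously regular, and $S=V(G)$ with every degree equal to $n-1=\delta(G)$. In case (ii), $S$ coincides with the bipartition class of degree $k$, so each row sum of $A(G_S)$ equals $k+1$ (the loop on a vertex of $S$ compensates precisely for the smaller degree in $G$); hence $A(G_S)$ is row-regular and, being symmetric, also column-regular, so it is regular. Moreover $\delta(G)=k$ and $d_G(v)=k$ for every $v\in S$, completing the equality check. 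There is no serious obstacle in this argument; it is essentially careful bookkeeping of row sums, with Theorem~\ref{Zhou} doing the spectral work, the mild subtlety being to observe that for a symmetric non-negative matrix Zhou's ``regular'' reduces to constant row sums, which is immediate in both equality cases.
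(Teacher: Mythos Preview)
Your proof of the inequality is identical to the paper's: both apply Theorem~\ref{Zhou} to $A(G_S)$, expand $\sum_i r_i^2 = M_1(G) + 2\sum_{v\in S}d_G(v) + \sigma$, and bound the middle term below by $2\sigma\delta(G)$.

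The only difference is in how the equality cases are verified. The paper computes $\lambda_1(G_S)$ directly in each case---invoking \cite[Theorem~5.1]{akbari2023selfloop} to get $\lambda_1(\widehat{K_n})=n$ in case~(i), and Theorem~\ref{thm5.2} to get $\lambda_1(G_S)=k+1$ in case~(ii)---and then checks that the right-hand side of \eqref{eq:lowerbound2} evaluates to the same number. You instead trace equality through the two inequalities in the chain: you observe that in both cases $A(G_S)$ has constant row sums (equal to $n$, respectively $k+1$), so Zhou's equality criterion is met, and that $d_G(v)=\delta(G)$ for every $v\in S$, so the degree bound is tight. Your route is more self-contained, since it does not appeal to the external computations of $\lambda_1(G_S)$; the paper's route, on the other hand, yields the explicit value of $\lambda_1(G_S)$ as a by-product.
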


\begin{proof}
By Theorem~\ref{Zhou}, we have $\sqrt{n} \lambda_1(G_S) \geq \sqrt{\sum^n_{i=1} {r_i}^2},$ where
\begin{align}
\sum^n_{i=1} {r_i}^2
&= \sum_{v \in S} (d_{G} (v) + 1)^2  + \sum_{v \in V\backslash S} d^2_{G}(v) \nonumber \\
& =M_1(G) + 2\sum_{v \in S} d_G(v) + \sigma  \nonumber \\
& \geq M_1(G) + \sigma(2\delta(G) + 1). \label{eq:rowsum2}
\end{align}
Thus, we obtain Inequality \eqref{eq:lowerbound2}.  To see that the equality is attainable, we discuss two cases.

Case 1: $G_S \cong \widehat{K_n}.$ By \cite[Theorem 5.1]{akbari2023selfloop} that asserts that 
$\lambda_1(G_S) = n $ if and only if $G_S \cong \widehat{K_n},$ it suffices to check the right side of Inequality \eqref{eq:lowerbound2}:
\[
\sqrt{\frac{M_1(K_n)}{n} + \frac{n(2(n-1)+1)}{n}}
= n.
\]

Case 2:  Suppose $G$ is a $(k,k+1)$-semiregular graph such that $d_G(v)=k$ if $v\in S,$ $d_G(v)=k+1$ if $v \in V(G)\backslash S.$ (Such graphs exist, e.g. $(K_{3,2})_{S}$ with $S=M,$ $|M|=3.$) By Theorem~\ref{thm5.2},  
we have $\lambda_1(G_S) = \frac{2m+\sigma}{n}= k+1.$ Now, observe that for such graphs,
\[
M_1(G) = \sigma k^2 + (n-\sigma)(k+1)^2 = nk^2 + (n-\sigma)(2k+1).
\]
Thus,
\[
\sqrt{\frac{M_1(G)}{n} + \frac{\sigma(2\delta(G)+1)}{n}}
= \sqrt{k^2 + \frac{(n-\sigma)(2k+1)}{n} + \frac{\sigma(2k+1)}{n}}
=k+1. \qedhere
\]
\end{proof}

\begin{remark}
\begin{enumerate}
\item When $\sigma=0,$  Inequality \eqref{eq:lowerbound2} reduces to the classically known bound $\lambda_1(G) \geq \sqrt{M_1(G)/n}$ between the spectral radius and the first Zagreb index of $G.$
\item  Recently, Shetty and Bhat \cite{ShettyBhat2023} have obtained many results regarding $M_1(G_S).$ In particular, \cite[Theorem 7]{ShettyBhat2023} asserts that
\[
\lambda_1(G_S) \geq \sqrt{\frac{M_1(G_S) - 4(m+\sigma)}{n}+1}
\] under the same assumption as in Theorem~\ref{lowerbound}.  One oughts to compare these two inequalities but with some cautions: our Inequality  \eqref{eq:lowerbound2} only requires $M_1(G).$
First, observe that
\[
M_1(G_S)  = \sum_{v \in V} d^2_{G_{S}}(v) = \sum_{v \in S} (d_G(v) +2)^2 + \sum_{v \in V\backslash S} d^2_G(v) = M_1(G) + 4 \sum_{v \in S} d_G(v)+ 4\sigma.
\]
Thus,
\[
\frac{M_1(G_S)-4(m+\sigma)+n}{n}
= \frac{M_1(G)}{n} + \frac{4\sum_{v\in S} d_G(v) -4m +n}{n}.
\]
It suffices to compare $4\sum_{v\in S} d_G(v) -4m +n$ and $\sigma(2\delta +1).$
Observe further that
\[
4\sum_{v\in S} d_G(v) - 4m = 2 \sum_{v \in S} d_G(v) - 2 \sum_{v \in V\backslash S} d_G(v) \leq 2\sigma \Delta + 2\sigma \delta -2n\delta .
\]
Thus,
\begin{align*}
4\sum_{v\in S} d_G(v) -4m +n - \sigma(2\delta +1)
&\leq 2\sigma \Delta + 2\sigma \delta -2n\delta + n - 2\sigma \delta - \sigma \\
&= 2(\sigma \Delta -n\delta) + n-\sigma.
\end{align*}
For $0 \leq \sigma <n,$ as long as
\[
\frac{\sigma}{n} \leq \frac{2\delta(G)-1}{2\Delta(G)-1},
\]  then we have 
\[
\sqrt{\frac{M_1(G_S)-4(m+\sigma)}{n} +1}  \leq \sqrt{\frac{M_1(G)+ \sigma(2\delta +1)}{n}} \leq \lambda_1(G_S),
\]
i.e., our bound is better than \cite[Theorem 7]{ShettyBhat2023}.
\end{enumerate}
\end{remark}

\begin{remark}
\label{edgeless}
If $G_S=\widehat{\overline{K_n}}$ is the edgeless full-loop graph of order $n$, then the equality in \eqref{eq:lowerbound2} holds.
\end{remark}


Next, we establish a relation between the eigenvalues of $G_S$ and $\overline{G_S}.$ Let us recall the following statement.

\begin{theorem}\cite[Theorem 2.2]{akbari2023selfloop}
\label{KnSchar}
Let $(K_n)_S$ be the self-loop complete graph of order $n$ and $|S|=\sigma.$ Then, $\mathrm{Spec}((K_n)_S)$ is determined by the following three cases:
\begin{enumerate}[(i)]
\item If $\sigma=0,$ then $\mathrm{Spec}((K_n)_S) = \begin{pmatrix} n-1 & -1 \\ 1 & n-1 \end{pmatrix}.$
\item If $0<\sigma<n,$ then
\[
\mathrm{Spec}((K_n)_S) =
\begin{pmatrix}
\frac{(n-1)+\sqrt{(n-1)^2+4\sigma}}{2} &0 & -1 &  \frac{(n-1)-\sqrt{(n-1)^2+4\sigma}}{2}  \\
1 & \sigma -1 & n-\sigma -1 &1
\end{pmatrix}.
\]
\item If $\sigma=n,$ then $\mathrm{Spec}((K_n)_S) = \begin{pmatrix} n & 0 \\ 1 & n-1 \end{pmatrix}.$
\end{enumerate}
\end{theorem}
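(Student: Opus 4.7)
The plan is to work directly with the decomposition
\[
A((K_n)_S) = A(K_n) + \mathfrak{I}_S = J_n - I_n + \mathfrak{I}_S,
\]
where $J_n$ is the $n\times n$ all-ones matrix. The two extreme cases are then immediate: when $\sigma = 0$ the matrix reduces to $J_n - I_n$, whose spectrum is the classical $\{n-1,-1,\ldots,-1\}$, while when $\sigma = n$ we have $\mathfrak{I}_S = I_n$ and the matrix is simply $J_n$, which yields case (iii).

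For the main case $0<\sigma<n$, I would order the vertices so that $S$ occupies the first $\sigma$ coordinates and then exhibit three families of eigenvectors whose dimensions together total $n$. First, for any vector $v$ supported on $S$ with $\sum_{i\in S}v_i = 0$, one has $J_n v = 0$ and $\mathfrak{I}_S v = v$, so $A((K_n)_S)\,v = 0$; this contributes eigenvalue $0$ with multiplicity at least $\sigma -1$. Second, any vector $v$ supported on $V\setminus S$ with zero coordinate sum satisfies $J_n v = 0$ and $\mathfrak{I}_S v = 0$, hence $A((K_n)_S)\,v = -v$, giving eigenvalue $-1$ with multiplicity at least $n-\sigma-1$. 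Third, restricting to vectors of the form $(a,\ldots,a,b,\ldots,b)^T$ that are constant on each block, the eigenvalue equation collapses to the $2\times 2$ quotient system
\[
\begin{pmatrix} \sigma & n-\sigma \\ \sigma & n-\sigma -1 \end{pmatrix}\begin{pmatrix} a \\ b \end{pmatrix} = \lambda \begin{pmatrix} a \\ b \end{pmatrix},
\]
whose characteristic polynomial $\lambda^2 - (n-1)\lambda - \sigma$ has precisely the roots $\tfrac{(n-1)\pm\sqrt{(n-1)^2+4\sigma}}{2}$ claimed in (ii).

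A dimension count $(\sigma - 1) + (n - \sigma - 1) + 2 = n$ then confirms that all eigenvalues have been accounted for, and a short check (using $0<\sigma<n$) shows that the two quadratic roots are distinct from each other, from $0$, and from $-1$, so the multiplicities listed in (ii) are in fact exact. I do not anticipate a serious obstacle: the only point requiring care is recognising that the partition $\{S, V\setminus S\}$ is equitable for $A((K_n)_S)$, so that the zero-sum-on-a-block subspaces and the constant-on-blocks subspace are each invariant and together span $\R^n$; after that verification, each of the three families is a one-line direct computation.
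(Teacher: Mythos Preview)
Your argument is correct. Note, however, that the paper does not supply its own proof of this statement: it is quoted verbatim as \cite[Theorem~2.2]{akbari2023selfloop} and used only as a tool (in the proof of Theorem~\ref{NGupperbound} and in Theorem~\ref{bound}), so there is nothing in the present paper to compare your proof against. Your direct approach---exhibiting the zero-sum eigenvectors on each block and reading the remaining two eigenvalues off the $2\times 2$ quotient matrix of the equitable partition $\{S, V\setminus S\}$---is the standard way to handle such matrices and is entirely self-contained; the verification that the quadratic roots avoid $0$ and $-1$ (via $f(0)=-\sigma\neq 0$ and $f(-1)=n-\sigma\neq 0$) cleanly pins down the exact multiplicities.
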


\begin{theorem}\label{NGupperbound}
Let $G_S$ be a graph with self-loops of order $n$ and $|S|=\sigma.$ 
Then, for $j=2,...,n,$
\[
\lambda_j(G_S) + \lambda_{n-j+2}(\overline{G_S}) \leq
\begin{cases}
-1, &\quad \sigma=0, \\
1, &\quad  0<\sigma\leq n.
\end{cases}
\]
\end{theorem}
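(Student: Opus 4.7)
The plan is to reduce the Nordhaus-Gaddum-type inequality to a single eigenvalue estimate for the auxiliary matrix $C := A(G_S) + A(\overline{G_S})$. By Definition~\ref{complement}, $\overline{G_S} = \overline{G}_S$, so
\[
C = \bigl(A(G) + \mathfrak{I}_S\bigr) + \bigl(A(\overline{G}) + \mathfrak{I}_S\bigr) = (J - I) + 2\,\mathfrak{I}_S,
\]
where $J$ denotes the $n \times n$ all-ones matrix. Writing $A(G_S) = C + \bigl(-A(\overline{G_S})\bigr)$ and invoking the Courant--Weyl inequality \eqref{eq1} with the indices $j = 2$ and $i \in \{2,\ldots,n\}$, together with the identity $\lambda_k(-B) = -\lambda_{n-k+1}(B)$, yields the unified estimate
\[
\lambda_j(G_S) + \lambda_{n-j+2}(\overline{G_S}) \;\leq\; \lambda_2(C), \qquad j = 2, \ldots, n.
\]
Thus the whole theorem will follow once I show that $\lambda_2(C) \leq -1$ when $\sigma = 0$ and $\lambda_2(C) \leq 1$ when $0 < \sigma \leq n$.

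The case $\sigma = 0$ is immediate: $C = J - I$ has spectrum $\{n-1,\,-1,\ldots,-1\}$, so $\lambda_2(C) = -1$. For $0 < \sigma \leq n$, the key trick is a shift by $I$. Observe that $I - \mathfrak{I}_S = \mathfrak{I}_{V\setminus S}$, so
\[
C - I \;=\; J - 2\bigl(I - \mathfrak{I}_S\bigr) \;=\; J - 2\,\mathfrak{I}_{V\setminus S}.
\]
This exhibits $C - I$ as the sum of the rank-one positive semidefinite matrix $J$, with spectrum $\{n, 0, \ldots, 0\}$, and the negative semidefinite diagonal matrix $-2\,\mathfrak{I}_{V\setminus S}$, whose spectrum is $0$ with multiplicity $\sigma$ and $-2$ with multiplicity $n-\sigma$. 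A second application of \eqref{eq1} then gives
\[
\lambda_2(C - I) \;\leq\; \lambda_2(J) + \lambda_1(-2\,\mathfrak{I}_{V\setminus S}) \;=\; 0 + 0 \;=\; 0,
\]
where the hypothesis $\sigma \geq 1$ is used precisely to guarantee that $\lambda_1(-2\,\mathfrak{I}_{V\setminus S}) = 0$ rather than $-2$. Hence $\lambda_2(C) \leq 1$, finishing the argument.

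The main conceptual obstacle is spotting the correct reformulation: a direct diagonalization of $C$ via the partition $V = S \,\dot\cup\, (V\setminus S)$ is feasible but forces a case split, producing explicit eigenvalues $\tfrac{n \pm \sqrt{(n-2)^2 + 8\sigma}}{2}$ together with $1$ of multiplicity $\sigma - 1$ and $-1$ of multiplicity $n - \sigma - 1$; one must then separately inspect $\sigma = 1$, where the eigenvalue $1$ is absent and the bound is strict. The shift $C \mapsto C - I$ sidesteps this bookkeeping entirely and has the pleasant feature of recovering the classical Nordhaus-Gaddum bound $\lambda_i(G) + \lambda_{n-i+2}(\overline{G}) \leq -1$ in the loopless limit $\sigma = 0$.
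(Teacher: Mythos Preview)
Your argument is correct. Both you and the paper first reduce the problem, via a single application of the Courant--Weyl inequalities, to the estimate $\lambda_j(G_S)+\lambda_{n-j+2}(\overline{G_S})\le\lambda_2(C)$ for $C=A(G_S)+A(\overline{G_S})$; the only difference lies in how $\lambda_2(C)$ is bounded. The paper writes $C=A((K_n)_S)+\mathfrak{I}_S$ and then invokes Theorem~\ref{KnSchar} to read off $\lambda_2(A((K_n)_S))\le 0$, giving $\lambda_2(C)\le 0+1=1$ for $0<\sigma<n$ (with the endpoint cases $\sigma=0$ and $\sigma=n$ handled separately). You instead shift by $I$ and write $C-I=J-2\,\mathfrak{I}_{V\setminus S}$, so a second Courant--Weyl step against the rank-one matrix $J$ yields $\lambda_2(C-I)\le\lambda_2(J)+\lambda_1(-2\,\mathfrak{I}_{V\setminus S})=0$. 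Your route is slightly more self-contained in that it avoids the explicit spectrum of $(K_n)_S$ and handles all $0<\sigma\le n$ uniformly; the paper's route has the mild advantage of exhibiting $C$ directly as a perturbation of the complete self-loop graph, which ties in naturally with the surrounding narrative.
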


\begin{proof}
Let $A=A(G_S),$ $B=A(\overline{G_S}),$ and $C=A+B.$ By \eqref{eq2} in Theorem~\ref{Weylthm},  for $i=2$ and $j=2,...,n,$
\[
\lambda_j(A) + \lambda_{n-j+2}(B) \leq \lambda_2(C).
\]
Observe that $C$ can also be expressed as $A' +B'$ where $A'=A((K_n)_S)$ and $B'= \mathfrak{I}_{S}.$
Then, by \eqref{eq1}, for $i=2,$ we have for $j'=1$ or $j'=2,$ 
\[
\lambda_2(C) \leq \lambda_{j'}(A') + \lambda_{3-j'}(B').
\]
By Theorem~\ref{KnSchar}. $A'=A((K_n)_S)$ has 3 distinct spectral cases:

Case 1:  When $\sigma=0,$ $A'=A(K_n)$ and $B'=\textbf{0}_{n\times n}.$ Thus, $\lambda_{2}(C) = \lambda_{2}(A(K_n))= -1.$

Case 2:  When $\sigma =n,$ $A'=A(\widehat{K_n})$ and $B'= I_n.$ Then, $C= J + I_n$ where $J$ is the all-ones matrix. Thus, $\lambda_{2}(C)=1$.

Case 3: Suppose $0<\sigma<n.$ 
If $j'=1,$ then,
\[
\lambda_2(C) 
\leq \lambda_1(A') + \lambda_2(B') 
=  
\begin{cases}
\frac{(n-1)+ \sqrt{(n-1)^2 + 4\sigma}}{2} + 1, & 1<\sigma <n, \\
\frac{(n-1)+ \sqrt{(n-1)^2 + 4\sigma}}{2}, & \sigma=1.
\end{cases}
\]
If $j'=2,$ then, $
\lambda_2(C) \leq \lambda_2(A') + \lambda_1(B') \leq 0 + 1 =1.$
Thus, when $0<\sigma<n,$ we have $\lambda_2(C) \leq 1.$ 
\end{proof}


\begin{theorem}\cite[Theorem 11]{Nikif2006eigen}
\label{Nikifrovthm}
For $i =2,...,n,$ $\lambda_i(G) + \lambda_{n-i+2}(\overline{G}) \geq -1 -2\sqrt{2s(G)},$ where $s(G)= \sum_{v \in V(G)} \left| d_G(v) - \frac{2m}{n}\right|.$
\end{theorem}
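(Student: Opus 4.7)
The starting point is the identity $A(G) + A(\overline{G}) = J - I$, where $J$ is the $n \times n$ all-ones matrix. When $G$ is $\bar d$-regular (so $s(G) = 0$), the vector $\mathbf{1}$ is a joint eigenvector of both $A(G)$ and $A(\overline{G})$, the subspace $\mathbf{1}^\perp$ is invariant under both, and on $\mathbf{1}^\perp$ we have $A(G) + A(\overline{G}) = -I$. This pairs the remaining eigenvalues so that $\lambda_i(G) + \lambda_{n-i+2}(\overline{G}) = -1$ for every $i \geq 2$, matching the theorem at $s(G)=0$. The plan is to reduce the general case to this regular model through a rank-$2$ symmetric perturbation forcing $\mathbf{1}$ to be a common eigenvector, and then to transfer the identity back to $A(G), A(\overline{G})$ via Courant--Weyl perturbation.

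Concretely, set $\bar d = 2m/n$ and let $\boldsymbol{\delta}\in\mathbb{R}^n$ have entries $\delta_v = d_G(v) - \bar d$, so that $\mathbf{1}^T\boldsymbol{\delta}=0$ and $\|\boldsymbol{\delta}\|_1 = s(G)$. I would introduce
\[
E := \tfrac{1}{n}\bigl(\boldsymbol{\delta}\mathbf{1}^T + \mathbf{1}\boldsymbol{\delta}^T\bigr), \qquad \widetilde{A}(G) := A(G) - E, \qquad \widetilde{A}(\overline{G}) := A(\overline{G}) + E.
\]
A direct check gives $\widetilde{A}(G)\mathbf{1} = \bar d\,\mathbf{1}$, $\widetilde{A}(\overline{G})\mathbf{1} = (n-1-\bar d)\,\mathbf{1}$, and $\widetilde{A}(G) + \widetilde{A}(\overline{G}) = J - I$. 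Thus $\widetilde{A}(G)$ and $\widetilde{A}(\overline{G})$ sit in the same algebraic position as the adjacencies of a regular graph and its complement, and the regular-case argument applied on $\mathbf{1}^\perp$ yields the clean pairing $\lambda_i(\widetilde{A}(G)) + \lambda_{n-i+2}(\widetilde{A}(\overline{G})) = -1$ for $i \geq 2$, modulo verifying the sorting order of the eigenvalues.

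To return to the original matrices, I would bound the operator norm of $E$. Because $\mathbf{1}^T\boldsymbol{\delta}=0$, the symmetric rank-$2$ matrix $\boldsymbol{\delta}\mathbf{1}^T+\mathbf{1}\boldsymbol{\delta}^T$ has nonzero eigenvalues $\pm\sqrt{n}\,\|\boldsymbol{\delta}\|_2$, so $\|E\|_{\mathrm{op}} = \|\boldsymbol{\delta}\|_2/\sqrt{n}$. Combined with the elementary estimate $\|\boldsymbol{\delta}\|_2^2 \leq (\max_v|\delta_v|)\sum_v|\delta_v| \leq (n-1)\,s(G)$, this yields a bound of order $\sqrt{s(G)}$ on $\|E\|_{\mathrm{op}}$, which is already within the target $\sqrt{2\,s(G)}$. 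Two applications of the lower Courant--Weyl inequality \eqref{eq2} of Theorem~\ref{Weylthm} (with $j=i$, so $i-j+n=n$) give
\[
\lambda_i(G) \geq \lambda_i(\widetilde{A}(G)) - \|E\|_{\mathrm{op}}, \qquad \lambda_{n-i+2}(\overline{G}) \geq \lambda_{n-i+2}(\widetilde{A}(\overline{G})) - \|E\|_{\mathrm{op}},
\]
and adding these, together with the pairing identity from the second step, produces $\lambda_i(G) + \lambda_{n-i+2}(\overline{G}) \geq -1 - 2\,\|E\|_{\mathrm{op}} \geq -1 - 2\sqrt{2\,s(G)}$.

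The hard part will be the bookkeeping needed to justify the pairing identity $\lambda_i(\widetilde{A}(G)) + \lambda_{n-i+2}(\widetilde{A}(\overline{G})) = -1$: on $\mathbf{1}^\perp$ the eigenvalue pairing is transparent, but $\lambda_i(\widetilde A(G))$ refers to the sorted position in the full $n$-spectrum, so one must verify that $\bar d$ (respectively $n-1-\bar d$) is the top eigenvalue of $\widetilde{A}(G)$ (resp.\ $\widetilde{A}(\overline{G})$). Since $\widetilde{A}(G)$ need not be entrywise nonnegative, Perron--Frobenius is unavailable, and the verification may require an extra case split or a direct Rayleigh-quotient argument on the test subspace $\mathrm{span}(\mathbf{x}_i(G),\ldots,\mathbf{x}_n(G),\mathbf{1})$, where $\mathbf{x}_1(G),\ldots,\mathbf{x}_n(G)$ are orthonormal eigenvectors of $A(G)$. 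The same Rayleigh-quotient analysis is the natural tool both for dispatching the ordering issue and for fine-tuning the constant $2\sqrt{2}$ in the perturbation estimate.
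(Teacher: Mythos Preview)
The paper does not prove this statement: Theorem~\ref{Nikifrovthm} is quoted verbatim from Nikiforov's paper and used as a black box to derive Corollary~\ref{NGlowerbound}. So there is no ``paper's own proof'' to compare against; the relevant comparison is with Nikiforov's original argument.

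Your rank-$2$ perturbation $E=\tfrac{1}{n}(\boldsymbol{\delta}\mathbf{1}^T+\mathbf{1}\boldsymbol{\delta}^T)$ is exactly the right object, and the bound $\|E\|_{\mathrm{op}}=\|\boldsymbol{\delta}\|_2/\sqrt{n}$ together with the Weyl transfer is correct. The genuine gap is precisely the ordering issue you flag, and it is more serious than a bookkeeping nuisance: the pairing identity $\lambda_i(\widetilde A(G))+\lambda_{n-i+2}(\widetilde A(\overline G))=-1$ is \emph{false} in general. Take $G=K_3\cup\overline{K_5}$ ($n=8$). Here $\bar d=3/4$, while the compression of $A(G)$ to $\mathbf{1}^\perp$ has top eigenvalue $\mu_1=5/4$ (eigenvector $(5,5,5,-3,-3,-3,-3,-3)$), so $\bar d$ is only the \emph{second} eigenvalue of $\widetilde A(G)$. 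One computes $\lambda_2(\widetilde A(G))=\bar d=3/4$ and $\lambda_8(\widetilde A(\overline G))=-1-\mu_1=-9/4$, giving $\lambda_2(\widetilde A(G))+\lambda_8(\widetilde A(\overline G))=-3/2<-1$. In fact the general inequality goes the \emph{wrong} way: for commuting matrices summing to $J-I$ with $\mathbf 1$ a joint eigenvector, one always has $\lambda_i(\widetilde A(G))+\lambda_{n-i+2}(\widetilde A(\overline G))\le -1$, not $\ge -1$.

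Your proposed Rayleigh-quotient fix on $\mathrm{span}(\mathbf x_i,\dots,\mathbf x_n,\mathbf 1)$ only yields $\lambda_i(\widetilde A(G))\ge\min(\bar d,\mu_{i-1})$, which still leaves a deficit of $\mu_{i-1}-\bar d$ in the bad case and does not close the gap. What is actually needed is a separate estimate of Nikiforov bounding this deficit --- essentially $\lambda_1(G)-\bar d\le\sqrt{s(G)}$ (and a symmetric bound at the bottom of the spectrum), which is itself a nontrivial lemma in his paper. Once that is in hand, the deficit and the $2\|E\|_{\mathrm{op}}$ loss combine to give the stated constant $2\sqrt{2s(G)}$. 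So your scaffolding is right, but the proof cannot be completed without importing that additional spectral--radius--versus--average--degree inequality.
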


Thus, a lower bound for 
$\lambda_j(G_S) + \lambda_{n-j+2}(\overline{G_S})$
follows immediately from Theorems~\ref{spectrum}  and \ref{Nikifrovthm}.

\begin{corollary}\label{NGlowerbound}
Let $G_S$ be a self-loop graph of order $n$ and $|S|=\sigma.$ Let $\overline{G_S}$ be the complement of $G_S.$ Then, for $j=2,...,n,$
$\lambda_j(G_S) + \lambda_{n-j+2}(\overline{G_S}) \geq -1- 2\sqrt{2s(G)}.$
\end{corollary}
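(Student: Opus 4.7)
The plan is to chain the two advertised ingredients---Theorem~\ref{spectrum} and Theorem~\ref{Nikifrovthm}---together. The key observation that makes everything fit is that Definition~\ref{complement} sets $\overline{G_S}=\overline{G}_S$, so both $G_S$ and $\overline{G_S}$ are obtained from their simple counterparts $G$ and $\overline{G}$ by attaching loops at the \emph{same} vertex subset $S$. This means Theorem~\ref{spectrum} applies to each of them with respect to the same parameter set, and one can compare their eigenvalues to those of the loopless graphs index by index.

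First, I would invoke the left-hand inequality of \eqref{eq:CW} in Theorem~\ref{spectrum} twice: once for $G_S$ to conclude $\lambda_j(G_S) \geq \lambda_j(G)$, and once for $\overline{G_S}=\overline{G}_S$ to conclude $\lambda_{n-j+2}(\overline{G_S}) \geq \lambda_{n-j+2}(\overline{G})$. Adding the two inequalities gives
\[
\lambda_j(G_S) + \lambda_{n-j+2}(\overline{G_S}) \geq \lambda_j(G) + \lambda_{n-j+2}(\overline{G})
\]
for every $j=2,\ldots,n$.

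Finally, I would apply Theorem~\ref{Nikifrovthm} directly to the right-hand side of the inequality above to obtain
\[
\lambda_j(G) + \lambda_{n-j+2}(\overline{G}) \geq -1 - 2\sqrt{2s(G)},
\]
which chains with the previous display to yield the desired bound. There is no real obstacle in this argument; the content of the corollary is essentially a one-line deduction. The only conceptual point worth flagging is that it is exactly the convention $\overline{G_S}=\overline{G}_S$ adopted in Definition~\ref{complement} (rather than the alternative convention of complementing the loop set) that makes the monotonicity from Theorem~\ref{spectrum} transfer to $\overline{G_S}$, and hence makes the two bounds compose cleanly.
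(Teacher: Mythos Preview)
Your proposal is correct and matches the paper's intended argument exactly: the paper states that the corollary ``follows immediately from Theorems~\ref{spectrum} and \ref{Nikifrovthm}'' without giving further details, and your write-up is precisely the intended one-line deduction. Your remark that Definition~\ref{complement} (with $\overline{G_S}=\overline{G}_S$) is what makes the left-hand inequality of \eqref{eq:CW} applicable to both $G_S$ and $\overline{G_S}$ is on point.
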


\section{Nordhaus-Gaddum-type Bounds for the Energy of a Self-loop Graph}
\label{Sec5}

In this section, we present a Nordhaus-Gaddum-type bound for the energy of a graph with self-loops, in terms of its order $n$ and the number of loops $\sigma.$

First of all, let us recall some statements that we will use in the proof of Theorem \ref{NGEnergy}. 

\begin{theorem}
\cite{DaySo2007}\label{DaySo2007}
Let $X$, $Y$ and $Z$ be square matrices of order $n$, such that $X+Y=Z$. Then
$$\sum_{i=1}^{n} s_i(X)+\sum_{i=1}^{n} s_i(Y)\geq\sum_{i=1}^{n} s_i(Z),$$
where $s_i(M)$, $i=1,2,\ldots, n$, are the singular values of the matrix $M$. Equality holds if and only if there exists an orthogonal matrix $P$, such that $PX$ and $PY$ are both positive semi-definite.
\end{theorem}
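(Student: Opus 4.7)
The plan is to recognize $\sum_{i=1}^n s_i(M)$ as the trace (nuclear) norm $\|M\|_*$ of $M$, so that the inequality is in essence a triangle inequality for this norm. Still, I would prove it directly via the singular value decomposition and von Neumann's trace inequality, since this approach also makes the equality case transparent and furnishes an explicit orthogonal $P$.

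First, by the SVD, write $Z = U \Sigma V^\top$, where $U, V$ are orthogonal and $\Sigma = \mathrm{diag}(s_1(Z), \ldots, s_n(Z))$. Setting $P := V U^\top$ (orthogonal, as a product of orthogonal matrices) and using cyclicity of trace, I obtain
\begin{equation*}
\sum_{i=1}^n s_i(Z) \;=\; \tr(\Sigma) \;=\; \tr(U^\top Z V) \;=\; \tr(PZ) \;=\; \tr(PX) + \tr(PY),
\end{equation*}
where the last equality uses $Z = X + Y$. The key auxiliary fact I would then invoke is: for any real $n \times n$ matrix $M$ and any orthogonal $Q$,
\begin{equation*}
\tr(QM) \;\leq\; \sum_{i=1}^n s_i(M),
\end{equation*}
with equality if and only if $QM$ is symmetric and positive semi-definite. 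This is a specialization of von Neumann's trace inequality $|\tr(AB)| \leq \sum_i s_i(A)\, s_i(B)$ to $A = Q$, whose singular values all equal $1$. Applying this fact to both $\tr(PX)$ and $\tr(PY)$ yields the main inequality $\sum s_i(Z) \le \sum s_i(X) + \sum s_i(Y)$.

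For the equality characterization, if equality holds in the main bound then equality must hold simultaneously in both applications of the auxiliary fact, which forces $PX$ and $PY$ to each be positive semi-definite for this particular $P$. Conversely, if some orthogonal $P$ makes both $PX$ and $PY$ PSD, then $PZ = PX + PY$ is also PSD, so $\tr(PZ) = \sum_i s_i(PZ) = \sum_i s_i(Z)$ (using that orthogonal multiplication preserves singular values), while $\tr(PZ) = \tr(PX) + \tr(PY) = \sum_i s_i(X) + \sum_i s_i(Y)$, giving equality.

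The main obstacle is the scalar lemma underlying the equality case: for a general real square matrix $N$, $\tr(N) = \sum_i s_i(N)$ if and only if $N$ is PSD. The reverse direction is immediate, but the forward direction requires the chain $\tr(N) = \sum_i \re \lambda_i(N) \leq \sum_i |\lambda_i(N)| \leq \sum_i s_i(N)$, where the last step is Weyl's majorization inequality relating eigenvalue moduli to singular values. Forcing equality throughout compels $N$ to be normal with real non-negative eigenvalues, hence PSD. Once this lemma is in place, the remainder of the argument is routine bookkeeping with the SVD-based choice of $P$.
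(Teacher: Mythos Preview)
The paper does not supply its own proof of this statement; it is quoted verbatim from Day and So \cite{DaySo2007} and invoked as a black box in the proof of Theorem~\ref{NGEnergy}. So there is no in-paper argument to compare your proposal against.

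That said, your argument is correct and is essentially the standard route to this result (and close in spirit to the original). Identifying $\sum_i s_i(M)$ with the nuclear norm and proving the triangle inequality via the variational formula $\sum_i s_i(M)=\max_{Q\ \text{orthogonal}}\tr(QM)$, with the SVD-based witness $P=VU^{\top}$ for $Z$, is exactly the right mechanism, and it makes both the inequality and the equality discussion fall out cleanly. One small simplification for your ``scalar lemma'' (that $\tr(N)=\sum_i s_i(N)$ forces $N$ to be symmetric PSD): rather than going through Weyl's majorization and the normality characterization, write $N=U\Sigma V^{\top}$ and note $\tr(N)=\sum_i s_i(N)\,(V^{\top}U)_{ii}$ with each $|(V^{\top}U)_{ii}|\le 1$; equality forces $(V^{\top}U)_{ii}=1$ whenever $s_i>0$, hence $u_i=v_i$ on the support of $\Sigma$, giving $N=\sum_i s_i\,u_i u_i^{\top}$ directly.
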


\begin{theorem} (Corollary 1.3.13 and Theorem 1.3.14 from \cite{cvetkovic2010intro})\label{quotient}
Let $A$ be a real symmetric matrix whose rows and columns are indexed by $\{1,2,\ldots, n\}$, and with eigenvalues $\lambda_1\geq\lambda_2\geq\cdots\geq\lambda_n$.  Given a partition $\{1, 2,\ldots, n\}=\Delta_1\,\dot{\cup}\,\Delta_2\,\dot{\cup}\cdots\dot{\cup}\,\Delta_m$ with $|\Delta_i|=n_i>0$, consider the corresponding blocking $A=(A_{ij})$, where $A_{ij}$ is an $n_i\times n_j$ block. Let $e_{ij}$ be the sum of the entries in $A_{ij}$ and set $B=(e_{ij}/n_i)$ (note that $e_{ij}/n_i$ is the average row sum in $A_{ij}$). Let us suppose that the block $A_{ij}$ has constant row sums $b_{ij}$, and let $B=(b_{ij})$. Then the spectrum of $B$ is contained in the spectrum of $A$ (taking into account the multiplicities of the eigenvalues).
\end{theorem}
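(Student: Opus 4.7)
The plan is to construct the characteristic matrix of the partition, use it to intertwine $A$ with $B$, and then exploit the symmetry of $A$ to upgrade the intertwining to divisibility of characteristic polynomials.

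First, I would define the characteristic matrix $P \in \R^{n \times m}$ by $P_{i\ell}=1$ if $i \in \Delta_\ell$ and $P_{i\ell}=0$ otherwise, so that the columns of $P$ are the indicator vectors of the disjoint nonempty blocks $\Delta_1,\ldots,\Delta_m$. Because these columns are supported on disjoint nonempty sets, $P$ has full column rank $m$, and the linear map $P: \R^m \to V := \mathrm{Im}(P)$ is a bijection. For $i \in \Delta_k$ and $\ell \in \{1,\ldots,m\}$, a direct computation gives
\[
(AP)_{i\ell} = \sum_{j \in \Delta_\ell} A_{ij} = b_{k\ell} = \sum_{r=1}^{m} P_{ir} b_{r\ell} = (PB)_{i\ell},
\]
where the second equality is precisely the constant row-sum hypothesis on the block $A_{k\ell}$. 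Hence $AP = PB$.

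Next, the relation $AP = PB$ shows that $V$ is $A$-invariant. Because $A$ is real symmetric, $V^{\perp}$ is also $A$-invariant, so $A$ decomposes orthogonally as $A|_V \oplus A|_{V^{\perp}}$. Using the ordered basis of $V$ given by the columns of $P$, the identity $AP = PB$ says that the matrix of $A|_V$ in this basis is exactly $B$. Consequently
\[
\det(xI_n - A) = \det(xI_m - B) \cdot \det\bigl(xI_{n-m} - A|_{V^{\perp}}\bigr),
\]
so the characteristic polynomial of $B$ divides that of $A$. This is exactly the statement that the spectrum of $B$ is contained in the spectrum of $A$, with multiplicities accounted for.

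The main subtlety is that $B$ need not be symmetric when the block sizes $n_i$ differ, so one cannot simply diagonalize $B$ in an orthonormal basis and push eigenvectors forward; the route above sidesteps this difficulty by passing through characteristic polynomials rather than eigenspaces. A minor care is needed to verify that $P$ has full column rank (immediate from the partition being into nonempty disjoint sets) and that the matrix representation of $A|_V$ in the basis $\{Pe_1,\ldots,Pe_m\}$ really is $B$, both of which are direct from the construction.
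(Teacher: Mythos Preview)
The paper does not give its own proof of this statement; it is quoted verbatim as Corollary~1.3.13 and Theorem~1.3.14 from \cite{cvetkovic2010intro} and then invoked as a black box in the proof of Theorem~\ref{NGEnergy}. Your argument is correct and is in fact the standard textbook proof: the intertwining identity $AP=PB$ for the characteristic matrix $P$ of the partition shows that $V=\mathrm{Im}(P)$ is $A$-invariant, symmetry of $A$ then gives $A$-invariance of $V^{\perp}$, and since the matrix of $A|_V$ in the column basis of $P$ is $B$, the characteristic polynomial of $A$ factors with $\det(xI_m-B)$ as a factor. Your remark that $B$ need not be symmetric, so one should argue via characteristic polynomials rather than by pushing forward an orthonormal eigenbasis of $B$, is well taken.
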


The matrix $B$ from Theorem \ref{quotient} is known as the \textit{quotient matrix} (see, for example \cite{BrouwerHaemers}). Besides, in case the row sum of each block $A_{ij}$ is constant, then the partition is called \textit{equitable} \cite{BrouwerHaemers}.

\begin{theorem}\label{NGEnergy}
Let $G$ be a graph of order $n\geq 2,$ size $m\geq 1$, and $S\subseteq V(G)$, where $|S|=\sigma$, $0\leq\sigma\leq n$. Then,
\[
\mathcal{L}_{n,\sigma}\leq \mathcal{E}(G_S)+\mathcal{E}(\overline{G_S})\leq \mathcal{U}_{n,\sigma},
\]
where
\begin{align*}
\mathcal{L}_{n,\sigma} = & \,(\sigma-1)\left|1-\frac{2\sigma}{n}\right|+(n-\sigma-1)\left(1+\frac{2\sigma}{n}\right)+ \frac{1}{2}\left|n-\frac{4\sigma}{n}+\sqrt{(n-2)^2+8\sigma}\right| +\\
                         & \, \frac{1}{2}\left|n-\frac{4\sigma}{n}-\sqrt{(n-2)^2+8\sigma}\right|,
\end{align*}
and
$$\mathcal{U}_{n,\sigma}=2\sqrt{2}\,\sqrt{2\sigma(n-1)(n-\sigma)+(n^2-n)^2}.$$
\end{theorem}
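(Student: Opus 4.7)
\emph{Plan.} I will treat the two inequalities separately: the upper bound falls out of a crude counting argument combined with \eqref{eq:bound1}, while the lower bound uses the trace-norm triangle inequality (Theorem~\ref{DaySo2007}) together with an explicit spectrum computation.

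\emph{Upper bound.} Writing $\mu_i := \lambda_i(G_S) - \sigma/n$, Lemma~\ref{lem1}(i) gives $\sum_{i=1}^n \mu_i = 0$, hence
\[
\mathcal{E}(G_S) = 2\sum_{\mu_i > 0} \mu_i \leq 2n\bigl(\lambda_1(G_S) - \tfrac{\sigma}{n}\bigr),
\]
since there are at most $n$ positive terms, each bounded above by $\mu_1 = \lambda_1(G_S) - \sigma/n \geq 0$ (nonnegativity by Theorem~\ref{thm5.2}). Plugging in \eqref{eq:bound1} yields $\mathcal{E}(G_S) \leq 2\sqrt{(n-1)[\sigma(n-\sigma) + 2mn]}$, and the same estimate holds for $\mathcal{E}(\overline{G_S})$ with $\overline{m}$ in place of $m$. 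Summing the two, applying $\sqrt{a}+\sqrt{b} \leq \sqrt{2(a+b)}$, and substituting $m + \overline{m} = n(n-1)/2$ simplifies after routine algebra to exactly $\mathcal{U}_{n, \sigma}$.

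\emph{Lower bound.} Set $\widetilde{A}(G_S) := A(G_S) - \tfrac{\sigma}{n} I_n$ and $\widetilde{A}(\overline{G_S}) := A(\overline{G_S}) - \tfrac{\sigma}{n} I_n$. Both matrices are real and symmetric, so their singular values equal the absolute values of their eigenvalues, giving $\sum_j s_j(\widetilde{A}(G_S)) = \mathcal{E}(G_S)$ and similarly for the complement. Using Definition~\ref{complement} together with $A(G) + A(\overline{G}) = J - I_n$, I obtain
\[
\widetilde{A}(G_S) + \widetilde{A}(\overline{G_S}) = \bigl(J - I_n + 2\mathfrak{I}_S\bigr) - \tfrac{2\sigma}{n} I_n.
\]
Theorem~\ref{DaySo2007}, applied with $X = \widetilde{A}(G_S)$ and $Y = \widetilde{A}(\overline{G_S})$, then yields
\[
\mathcal{E}(G_S) + \mathcal{E}(\overline{G_S}) \geq \sum_{j=1}^n s_j\Bigl(\widetilde{A}(G_S) + \widetilde{A}(\overline{G_S})\Bigr).
\]
Therefore it remains to diagonalize $M := J - I_n + 2\mathfrak{I}_S$. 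Ordering the vertices so that $S$ comes first, the partition $V = S\,\dot{\cup}\,(V\setminus S)$ is equitable for $M$ with quotient matrix $\begin{pmatrix} \sigma+1 & n-\sigma \\ \sigma & n-\sigma-1 \end{pmatrix}$, whose eigenvalues $\tfrac{1}{2}\bigl(n \pm \sqrt{(n-2)^2 + 8\sigma}\bigr)$ lie in the spectrum of $M$ by Theorem~\ref{quotient}. On the orthogonal complement of the two cell-indicator vectors, $M$ restricts to $I_\sigma$ on the $S$-block and $-I_{n-\sigma}$ on the other, contributing eigenvalue $+1$ with multiplicity $\sigma - 1$ and eigenvalue $-1$ with multiplicity $n - \sigma - 1$. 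Shifting all $n$ eigenvalues by $-2\sigma/n$ and summing their absolute values recovers the four groups that make up $\mathcal{L}_{n,\sigma}$.

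The only places where I expect to need care are the bookkeeping in the eigenvalue computation of $M$ and the subsequent consistency checks at the endpoints $\sigma = 0$ and $\sigma = n$, where the ``special'' roots $\tfrac{1}{2}(n \pm \sqrt{(n-2)^2 + 8\sigma})$ collapse to $\pm 1$ and the negative multiplicities written in the formula cancel harmlessly. I do not anticipate a deeper obstacle: the lower bound is essentially the trace-norm triangle inequality for two symmetric matrices whose sum has a combinatorially transparent spectrum, and the upper bound is a standard crude-count argument chained with \eqref{eq:bound1} and $m + \overline{m} = n(n-1)/2$.
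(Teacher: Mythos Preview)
Your proof is correct and follows essentially the same route as the paper: the upper bound via $\mathcal{E}(G_S)\le 2n(\lambda_1(G_S)-\sigma/n)$ combined with \eqref{eq:bound1} and $\sqrt{a}+\sqrt{b}\le\sqrt{2(a+b)}$ is exactly the content of the paper's invocation of Theorem~\ref{thm2}, and your lower bound via Theorem~\ref{DaySo2007} plus the equitable-partition spectrum of $J-I_n+2\mathfrak{I}_S-\tfrac{2\sigma}{n}I_n$ matches the paper's computation (you diagonalize before shifting, the paper after, but the eigenvalues and hence $\mathcal{L}_{n,\sigma}$ coincide). The only cosmetic point is that $\lambda_1(G_S)\ge\sigma/n$ follows directly from $\operatorname{tr} A(G_S)=\sigma$ without the connectedness hypothesis of Theorem~\ref{thm5.2}, which neither you nor the paper actually needs here.
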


\begin{proof}
Let $\lambda_1(G_S)\geq\lambda_2(G_S)\geq\cdots\geq\lambda_n(G_S)$ and $\lambda_1(\overline{G_S})\geq\lambda_2(\overline{G_S})\geq\cdots\geq\lambda_n(\overline{G_S})$ be the eigenvalues of $G_S$ and $\overline{G_S}$, respectively. 

For the upper bound $\mathcal{U}_{n,\sigma}$, let $k_1$ and $k_2$, $1\leq k_1, k_2\leq n$, be the greatest integers such that $\lambda_{k_1}(G_S)\geq\frac{\sigma}{n}$ and $\lambda_{k_2}(\overline{G_S})\geq\frac{\sigma}{n}$. Since $\sum_{i=1}^{n} \lambda_i(G_S)=\sigma$, we have $\sum_{i=1}^{n} \left(\lambda_i(G_S)-\frac{\sigma}{n}\right)=0$, and therefore
$$\mathcal{E}(G_S)=2\,\sum_{i=1}^{k_1}\left(\lambda_i(G_S)-\frac{\sigma}{n}\right)\leq 2\,\sum_{i=1}^{k_1}\left(\lambda_1(G_S)-\frac{\sigma}{n}\right)\leq 2n\,\left(\lambda_1(G_S)-\frac{\sigma}{n}\right).$$
Similarly, we find $\mathcal{E}(\overline{G_S})\leq 2n\,(\lambda_1(\overline{G_S})-\frac{\sigma}{n})$, and so
$$\mathcal{E}(G_S)+\mathcal{E}(\overline{G_S})\leq 2n\,(\lambda_1(G_S)+\lambda_1(\overline{G_S}))-4\sigma.$$
By Theorem \ref{thm2}, we get
$$\mathcal{E}(G_S)+\mathcal{E}(\overline{G_S})\leq2\sqrt{2}\,\sqrt{2\sigma(n-1)(n-\sigma)+(n^2-n)^2}.$$

For the lower bound, $\mathcal{L}_{n,\sigma}$, let us denote
\begin{align*}
& X = A(G_S)-\frac{\sigma}{n}I_n = A(G)+\mathfrak{I}_S-\frac{\sigma}{n}I_n,\\
& Y = A(\overline{G_S})-\frac{\sigma}{n}I_n = A(\overline{G})+\mathfrak{I}_S-\frac{\sigma}{n}I_n=J-A(G)+\mathfrak{I}_S-\left(1+\frac{\sigma}{n}\right)I_n,\\
& M = X+Y = J+2\mathfrak{I}_S-\left(1+\frac{2\sigma}{n}\right)I_n.
\end{align*}

Therefore, the matrix $M$ is of the following form:

$$M=\left(
      \begin{array}{cc}
        J+\left(1-\frac{2\sigma}{n}\right)I_{\sigma} & J \\
        J & J+\left(-\frac{2\sigma}{n}-1\right)I_{n-\sigma} \\
      \end{array}
    \right),
$$
i.e.

$$M=\left(
      \begin{array}{cccccc}
        2-\frac{2\sigma}{n} & 1 & 1 & 1 & \cdots & 1 \\
        \vdots & \ddots & \vdots & \vdots & \cdots & 1 \\
        1 & 1 & 2-\frac{2\sigma}{n} & 1 & \cdots & 1 \\
        1 & 1 & 1 & -\frac{2\sigma}{n} & \cdots & 1 \\
        \vdots & \vdots & \vdots & \vdots & \ddots & \vdots \\
        1 & 1 & 1 & 1 & \cdots & -\frac{2\sigma}{n} \\
      \end{array}
    \right).
$$

Since $\mathrm{null}\left(M+\left(1-\left(2-\frac{2\sigma}{n}\right)\right)I_n\right)\geq \sigma-1$, $M$ has eigenvalue $1-\frac{2\sigma}{n}$, with multiplicity at least $\sigma-1$. Similarly, $M$ has eigenvalue $-1-\frac{2\sigma}{n}$, with multiplicity at least $n-\sigma-1$. Let us determine the remaining two eigenvalues, $x_1$ and $x_2$, of the matrix $M$.

The quotient matrix $B$ which corresponds to the matrix $M$ is
\[
B=\left(
  \begin{array}{cc}
    \sigma+1-\frac{2\sigma}{n} & n-\sigma \\
    \sigma & n-\sigma-1-\frac{2\sigma}{n} \\
  \end{array}
\right),
\]
while the characteristic polynomial of the matrix $B$ is
\[
b(x)=x^2+\left(\frac{4\sigma}{n}-n\right)x+\frac{4\sigma^2}{n^2}-4\sigma-1+n.
\]
According to Theorem \ref{quotient}, the roots of the polynomial $b(x)$ are the two remaining eigenvalues of the matrix $M$, i.e. $x_{1,2}=\frac{1}{2}\left(n-\frac{4\sigma}{n}\pm\sqrt{(n-2)^2+8\sigma}\right)$.







Now, we have:
\begin{align*}
\sum_{i=1}^{n} s_i(M)= & \sum_{i=1}^{n}|\lambda_i(M)|=(\sigma-1)\left|1-\frac{2\sigma}{n}\right|+(n-\sigma-1)\left(1+\frac{2\sigma}{n}\right)+\\
                       & \frac{1}{2}\left|n-\frac{4\sigma}{n}+\sqrt{(n-2)^2+8\sigma}\right| + \frac{1}{2}\left|n-\frac{4\sigma}{n}-\sqrt{(n-2)^2+8\sigma}\right|,
\end{align*}
i.e. $\sum_{i=1}^{n} s_i(M)=\mathcal{L}_{n,\sigma}$. By Theorem \ref{DaySo2007}, we obtain
\begin{align*}
\mathcal{L}_{n,\sigma}= & \sum_{i=1}^{n} s_i(M) \leq \sum_{i=1}^{n} s_i(X)+\sum_{i=1}^{n} s_i(Y)=\sum_{i=1}^{n} |\lambda_i(X)|+\sum_{i=1}^{n} |\lambda_i(Y)|\\
= & \sum_{i=1}^{n}|\lambda_i(G_S)-\frac{\sigma}{n}|+\sum_{i=1}^{n}|\lambda_i(\overline{G_S})-\frac{\sigma}{n}|=\mathcal{E}(G_S)+\mathcal{E}(\overline{G_S}).
\end{align*}
The proof is complete.
\end{proof}

\begin{remark}
If $\frac{n}{2}<\sigma\leq n$ and $n\geq 2$, then the following holds:
\begin{itemize}
  \item $1-\frac{2\sigma}{n}<0$,
  \item $n+\sqrt{(n-2)^2+8\sigma}-\frac{4\sigma}{n}>n+\sqrt{(n-2)^2+8\cdot\frac{n}{2}}-\frac{4}{n}\cdot n=n+\sqrt{n^2+4}-4\geq 2+\sqrt{8}-4>0$,
  \item $\frac{4\sigma}{n}+\sqrt{(n-2)^2+8\sigma}-n>\frac{4}{n}\cdot\frac{n}{2}+\sqrt{(n-2)^2+8\cdot\frac{n}{2}}-n=2+\sqrt{n^2+4}-n>2+\sqrt{n^2}-n>0$,
\end{itemize}
so the lower bound $\mathcal{L}_{n,\sigma}$ reduces to
\begin{align*}
\mathcal{L}_{n,\sigma} = &  (\sigma-1)\left(\frac{2\sigma}{n}-1\right)+(n-\sigma-1)\left(1+\frac{2\sigma}{n}\right)+\frac{1}{2}\left(n+\sqrt{(n-2)^2+8\sigma}-\frac{4\sigma}{n}\right)+\\
&\frac{1}{2}\left(\frac{4\sigma}{n}+\sqrt{(n-2)^2+8\sigma}-n\right),
\end{align*}
i.e. $\mathcal{L}_{n,\sigma}=n-\frac{4\sigma}{n}+\sqrt{(n-2)^2+8\sigma}$.
\end{remark}

\begin{remark}
If $0\leq\sigma\leq\frac{n}{2}$ and $n\geq 2$, then the following holds:
\begin{itemize}
  \item $1-\frac{2\sigma}{n}\geq1-\frac{2}{n}\cdot\frac{n}{2}=0$,
  \item $n-\frac{4\sigma}{n}\geq n-\frac{4}{n}\cdot\frac{n}{2}=n-2\geq0$,
\end{itemize}
and therefore the lower bound $\mathcal{L}_{n,\sigma}$ reduces to
\begin{align*}
\mathcal{L}_{n,\sigma} = & (\sigma-1)\left(1-\frac{2\sigma}{n}\right)+(n-\sigma-1)\left(1+\frac{2\sigma}{n}\right)+\frac{1}{2}\left(n-\frac{4\sigma}{n}+\sqrt{(n-2)^2+8\sigma}\right)+\\
& \frac{1}{2}\left|n-\frac{4\sigma}{n}-\sqrt{(n-2)^2+8\sigma}\right|,
\end{align*}
i.e. $\mathcal{L}_{n,\sigma}=\frac{3n}{2}-2+2\sigma\left(1-\frac{1}{n}-\frac{2\sigma}{n}\right)+\frac{1}{2}\sqrt{(n-2)^2+8\sigma}+\frac{1}{2}\left|n-\frac{4\sigma}{n}-\sqrt{(n-2)^2+8\sigma}\right|$,
since the function $f(\sigma)=n-\frac{4\sigma}{n}-\sqrt{(n-2)^2+8\sigma}$ does not have a constant sign on the interval $\left[0,\frac{n}{2}\right]$.
\end{remark}

\textbf{Acknowledgement.}
The third author acknowledges the support from the Ministry of Higher Education Malaysia for Fundamental Research Grant Scheme with Project Code: 
\linebreak FRGS/1/2021/STG06/USM/02/7. The second author thanks the Serbian Ministry of Science, Technological Development and Innovation, for the support through the Mathematical Institute of the Serbian Academy of Sciences and Arts.
We thank the anonymous reviewers for their constructive suggestions that improve the presentation of this paper.

\vspace{0.5cm}
\textbf{Conflicts of interest.} The authors declare no conflict of interest.

\bibliography{bibliography}{}

\providecommand{\bysame}{\leavevmode\hbox to3em{\hrulefill}\thinspace}
\providecommand{\MR}{\relax\ifhmode\unskip\space\fi MR }
\providecommand{\MRhref}[2]{%
  \href{http://www.ams.org/mathscinet-getitem?mr=#1}{#2}
}
\providecommand{\href}[2]{#2}
\begin{thebibliography}{10}

\bibitem{akbari2023selfloop}
S.~Akbari, H.~Al~Menderj, M.~H. Ang, J.~Lim, and Z.~C. Ng, \emph{Some results
  on spectrum and energy of graphs with loops}, Bull. Malays. Math. Sci. Soc.
  \textbf{46} (2023), no.~3, Paper No. 94, 18. \MR{4567384}

\bibitem{anchan20232}
D.~V. Anchan, S.~D’Souza, H.~J. Gowtham, and P.~G. Bhat, \emph{Laplacian
  energy of a graph with self-loops}, MATCH Commun. Math. Comput. Chem.
  \textbf{90} (2023), no.~1, 247--258.

\bibitem{anchan2023}
\bysame, \emph{Sombor energy of a graph with self–loops}, MATCH Commun. Math.
  Comput. Chem. \textbf{90} (2023), no.~3, 773--786.

\bibitem{BrouwerHaemers}
A.~E. Brouwer and W.~H. Haemers, \emph{Spectra of graphs}, first ed.,
  Universitext, Springer New York, NY, 2011.

\bibitem{cvetkovic1995spectra}
D.~M. Cvetkovi\'{c}, M.~Doob, and H.~Sachs, \emph{Spectra of {G}raphs -
  {T}heory and {A}pplications}, third ed., Johann Ambrosius Barth, Heidelberg,
  1995. \MR{1324340}

\bibitem{cvetkovic2010intro}
D.~M. Cvetkovi\'{c}, P.~Rowlinson, and S.~Simi\'{c}, \emph{An introduction to
  the theory of graph spectra}, London Mathematical Society Student Texts,
  vol.~75, Cambridge University Press, Cambridge, 2010. \MR{2571608}

\bibitem{DaySo2007}
J.~Day and W.~So, \emph{Singular value inequality and graph energy change},
  Electron. J. Linear Algebra \textbf{16} (2007), 291--299. \MR{2349924}

\bibitem{DaySo2008}
\bysame, \emph{Graph energy change due to edge deletion}, Linear Algebra Appl.
  \textbf{428} (2008), no.~8-9, 2070--2078. \MR{2401641}

\bibitem{gutman1979}
I.~Gutman, \emph{The energy of a graph}, Ber. Math. -Statist. Sekt.
  Forschungsz. Graz \textbf{103} (1978), 1--22.

\bibitem{gutman1979topological}
\bysame, \emph{Topological studies on heteroconjugated molecules}, Theoretica
  Chimica Acta \textbf{50} (1979), no.~4, 287--297.

\bibitem{gutman1990topological}
\bysame, \emph{Topological studies on heteroconjugated molecules. {VI}.
  {A}lternant systems with two heteroatoms}, Zeitschrift f{\"u}r Naturforschung
  A \textbf{45} (1990), no.~9--10, 1085--1089.

\bibitem{gutman&furtula2019}
I.~Gutman and B.~Furtula, \emph{Energies of {G}raphs: {S}urvey, {C}ensus,
  {B}ibliography}, Center for Scientific Research of the Serbian Academy of
  Sciences and Arts and the University of Kragujevac, Kragujevac, 2019.

\bibitem{gutmanli2016}
I.~Gutman and X.~Li, \emph{Graph energies - theory and applications}, Univ.
  Kragujevac, Kragujevac, 2016.

\bibitem{gutman&ramane2019}
I.~Gutman and H.~Ramane, \emph{Research on graph energies in 2019}, MATCH
  Commun. Math. Comput. Chem. \textbf{84} (2020), no.~2, 277--292.

\bibitem{gutman2022energy}
I.~Gutman, I.~Red{\v{z}}epovi{\'c}, B.~Furtula, and A.~Sahal, \emph{Energy of
  {G}raphs with {S}elf-{L}oops}, MATCH Commun. Math. Comput. Chem. \textbf{87}
  (2022), 645--652.

\bibitem{gutman&trinajstic1972}
I.~Gutman and N.~Trinajsti{\'c}, \emph{Graph {T}heory and {M}olecular
  {O}rbitals, {T}otal $\pi$-{E}lectron {E}nergy of {A}lternant {H}ydrocarbons},
  Chemical Physics Letters \textbf{17} (1972), 535--538.

\bibitem{jovanovic2023}
I.~M. Jovanovi{\'c}, E.~Zogi{\'c}, and E.~Glogi{\'c}, \emph{On the conjecture
  related to the energy of graphs with self-loops}, MATCH Commun. Math. Comput.
  Chem. \textbf{89} (2023), 479--488.

\bibitem{gutmanlishi2012}
X.~Li, Y.~Shi, and I.~Gutman, \emph{Graph energy}, Springer, New York, 2012.
  \MR{2953171}

\bibitem{SomborQuanti2023}
J.~Lim, Z.~K. Chew, M.~Lim, and K.~J. Thoo, \emph{Quantization of {S}ombor
  {E}nergy for {C}omplete {G}raphs with {S}elf-{L}oops of {L}arge {S}ize},
  Iranian Journal of Mathematical Chemistry \textbf{14} (2023), no.~4,
  225--241.

\bibitem{mallion1974graphical}
R.~B. Mallion, A.~J. Schwenk, and N.~Trinajsti{\'c}, \emph{A graphical study of
  heteroconjugated molecules}, Croatica Chemica Acta \textbf{46} (1974), no.~3,
  171--182.

\bibitem{Marczyk1985}
A.~Marczyk and Z.~Skupie\'{n}, \emph{General approach to line graphs of
  graphs}, Demonstratio Math. \textbf{18} (1985), no.~2, 447--465. \MR{819326}

\bibitem{Nikif2006eigen}
V.~Nikiforov, \emph{Eigenvalues and degree deviation in graphs}, Linear Algebra
  Appl. \textbf{414} (2006), no.~1, 347--360. \MR{2209250}

\bibitem{nikiforov2007}
\bysame, \emph{The energy of graphs and matrices}, J. Math. Anal. Appl.
  \textbf{326} (2007), no.~2, 1472--1475. \MR{2280998}

\bibitem{nosal1970}
E.~Nosal, \emph{Eigenvalues of graphs}, Master's Thesis, University of Calgary
  (1970).

\bibitem{popat&shingala2023}
K.~M. Popat and K.~R. Shingala, \emph{Some new results on energy of graphs with
  self loops}, J Math Chem \textbf{61} (2023), 1462–1469.

\bibitem{ShettyBhat2023}
S.~S. Shetty and A.~K. Bhat, \emph{On the first zagreb index of graphs with
  self-loops}, AKCE International Journal of Graphs and Combinatorics
  \textbf{20} (2023), no.~3, 326--331.

\bibitem{Stanley1987}
R.~P. Stanley, \emph{A bound on the spectral radius of graphs with {$e$}
  edges}, Linear Algebra Appl. \textbf{87} (1987), 267--269. \MR{878683}

\bibitem{Zhou2000spectral}
B.~Zhou, \emph{On the spectral radius of nonnegative matrices}, Australas. J.
  Combin. \textbf{22} (2000), 301--306. \MR{1795343}

\end{thebibliography}
\bibliographystyle{amsplain}
\end{document}